\title[Congruences on KGW]{Congruences on K-theoretic \\ Gromov--Witten invariants}
\author[Gu\'er\'e]{J\'er\'emy Gu\'er\'e}
\address{Univ. Grenoble Alpes, CNRS, IF, 38000 Grenoble, France}
\email{jeremy.guere@univ-grenoble-alpes.fr}
\newcommand{\EE}{\mathbb{E}}
\newcommand{\Contr}{\mathrm{Contr}}
\newcommand{\fix}{\mathrm{fix}}
\newcommand{\cSy}{\mathrm{Sym}}
\newcommand{\loc}{\mathrm{loc}}
\newcommand{\cC}{\mathcal{C}}
\newcommand{\cL}{\mathcal{L}}
\newcommand{\cN}{\mathcal{N}}
\newcommand{\vir}{\mathrm{vir}}
\newcommand{\fc}{\mathfrak{c}}
\newcommand{\PV}{\mathbf{PV}}
\newcommand{\cE}{\mathcal{E}}
\newcommand{\aA}{\mathbb{A}}
\newcommand{\ci}{\mathrm{i}}
\newcommand{\PP}{\mathbb{P}}
\newcommand{\CC}{\mathbb{C}}
\newcommand{\ZZ}{\mathbb{Z}}
\newcommand{\NN}{\mathbb{N}}
\newcommand{\QQ}{\mathbb{Q}}
\newcommand{\cO}{\mathcal{O}}
\newcommand{\ev}{\mathrm{ev}}
\renewcommand{\(}{\left(}
\renewcommand{\)}{\right)}
\newcommand{\cF}{\mathcal{F}}
\newcommand{\cM}{\mathcal{M}}
\newcommand{\cP}{\mathcal{P}}
\newcommand{\sS}{\mathcal{S}}
\newcommand{\cX}{\mathcal{X}}
\newcommand{\Ch}{\mathrm{Ch}}
\newcommand{\Td}{\mathrm{Td}}
\newcommand{\cvir}{c_{\mathrm{vir}}}
\DeclareMathOperator{\Aut}{Aut}
\theoremstyle{plain}
\newtheorem{thm}{Theorem}[section]
\newtheorem{pro}[thm]{Proposition}
\newtheorem*{lem*}{Lemma}
\newtheorem{cor}[thm]{Corollary}
\theoremstyle{definition}
\newtheorem{dfn}[thm]{Definition}
\newtheorem{nt}[thm]{Notation}
\newtheorem{rem}[thm]{Remark}
\newtheorem*{rem*}{Remark}
\newtheorem*{rems*}{Remarks}
\newtheorem{exa}[thm]{Example}
\newtheorem*{exa*}{Examples}
\newtheorem*{exait*}{\rm \em Example}
\newtheorem*{exadefit*}{\rm \em Example/Definition}
\newtheorem*{cla*}{\rm \em Claim}
\newtheorem*{dfn*}{Definition}
\def\<{\left\langle}
\def\>{\right\rangle}
\begin{document}
\begin{abstract}
We study K-theoretic Gromov--Witten invariants of projective hypersurfaces using a virtual localization formula under finite group actions.
In particular, it provides all K-theoretic Gromov--Witten invariants of the quintic threefold modulo $41$, up to genus $19$ and degree $40$.
As an illustration, we give an instance in genus one and degree one. 
Applying the same idea to a K-theoretic version of FJRW theory, we determine it modulo $41$ for the quintic polynomial with minimal group and narrow insertions, in every genus.
\end{abstract}

\maketitle

\tableofcontents

\setcounter{section}{-1}

\section{Introduction}
One of the first achievement of Gromov--Witten (GW) theory is the celebrated formula of Candelas--de la Ossa--Green--Parkes \cite{Candelas} computing genus-$0$ invariants of the quintic threefold in terms of an hypergeometric series solution of a Picard--Fuchs equation. It was a first instance of mirror symmetry and was proved by \cite{Gi,LLY}.

The K-theoretic version of GW theory, that we refer to as KGW theory, was constructed in \cite{YP}, and it is only recently that mirror symmetry in this context was developed by Givental in his series of papers \cite{Givental3}. It relates the KGW generating series to a $q$-hypergeometric function solution of a finite-difference equation.

Both GW and KGW theories rely on the notion of a perfect obstruction theory \cite{BF}, producing two fundamental objects on the moduli space $\cM_{g,n}(X,\beta)$ of stable maps to a given variety $X$, namely the virtual cycle $[\cM_{g,n}(X,\beta)]^\vir$ living in the Chow ring of the moduli space and the virtual structure sheaf $\cO^\vir_{\cM_{g,n}(X,\beta)}$ living in its K-theory.

Given insertions $y_i \in \mathrm{CH}_*(X)$, or $Y_i \in K^0(X)$, and psi-classes $\psi_i \in \mathrm{CH}^1(\overline{\cM}_{g,n})$, or psi-bundles $\Psi_i \in K^0(\overline{\cM}_{g,n})$, we then form
$$a:= \prod_{i=1}^n \ev^*(y_i) \cdot \psi_i^{d_i} \quad \textrm{and} \quad A := \bigotimes_{i=1}^n \ev^*(Y_i) \otimes \Psi_i^{\otimes d_i} ~,~~ d_i \in \ZZ,$$
using evaluation maps.
It yields GW and KGW invariants
$$p_*\(a \cdot \left[\cM_{g,n}(X,\beta)\right]^\vir\) \in \QQ \quad \textrm{and} \quad p_!\(A \otimes \cO^\vir_{\cM_{g,n}(X,\beta)}\) \in \ZZ,$$
where $p$ is the projection map to a point along which we take pushforwards in Chow or in K-theory\footnote{In K-theory, it is also known as the Euler characteristic.}.
Each theory has an important feature: the virtual cycle is pure-dimensional, leading to a degree condition on the insertions for the GW invariant to be non-zero, and KGW invariants are all integers.
Moreover, the two theories are related via an Hirzebruch--Riemann--Roch theorem, see \cite{Tonita} and \cite[Part IX]{Givental3}, saying that all KGW invariants can be reconstructed from GW invariants.

Let $T$ be a torus.
When the variety $X$ carries a non-trivial $T$-action, so does the moduli space of stable maps, and the virtual cycle and virtual structure sheaf are $T$-equivariant.
One then benefits from the virtual localization formula \cite{GraberPandha} to reduce the computation of invariants to the $T$-fixed locus, which greatly simplifies the calculation.
Unfortunately, the automorphism group of a smooth projective hypersurface such as the quintic threefold is finite (except in the special cases of quadrics, elliptic curves, and K3 surfaces), so that there is no non-trivial $T$-action.

Let $G$ be a finite cyclic group.
In this paper, we take advantage of the fact that the (virtual) localization formula holds with no change under finite group actions.
Since projective hypersurfaces $X$ admit such actions, we can apply it to the study of KGW theory of $X$.
However, we still have two difficulties.
First, the $G$-fixed moduli space is in general quite involved and we cannot guarantee that it is smooth, so even after applying the virtual localization formula, we may not be able to finish the computation.
Second, the (virtual) localization formula gives an answer in a localized ring.
For instance, the $G$-equivariant K-theory of a point is isomorphic to the representation ring $R(G)$, which in the case of a finite cyclic group of order $M$ yields $\ZZ[X]/(1-X^M)$.
Instead of providing an answer in $R(G)$, the (virtual) localization formula only gives us the image in the localized complexified ring $R(G)_{\CC,\loc}$, where we invert a maximal ideal corresponding to a non-zero element in $G$.
The issue with the localized ring is that the map $R(G) \to R(G)_{\CC,\loc}$ is in general not injective.
In our example, we have $R(G)_{\CC,\loc} \simeq \CC$ and the map sends $X$ to a given primitive $M$-th root of unity, so that the non-zero polynomial $1+X+\dotsb+X^{M-1} \in R(G)$ is sent to $0 \in \CC$.
Notice that when the group is a torus $T$, the localization map $R(T) \to R(T)_{\CC,\loc}$ is injective, that is why we have no such issue in the previous paragraph.

We overcome the first difficulty by means of an `equivariant quantum Lefschetz theorem' that we developed for GW theory in \cite[Section 2]{Guere10} and that we adapt to KGW theory and to finite group actions in Section \ref{QLsection}, see Theorem \ref{quantum Lefschetz}.
It compares the $G$-equivariant virtual structure sheaf of an hypersurface $X \subset \PP^N$ to the one of the ambient space $\PP^N$, and then we use the $T$-action on the ambient space to apply the virtual localization formula.
However, Theorem \ref{quantum Lefschetz} requires that for every $G$-fixed stable maps from a curve $C$ to $X$, all stable components of $C$ are contracted to a point in $X$.
This condition could fail if the automorphism group of the curve is too big, leading us to impose restrictions on the genus of the curve and on the degree of the stable map.

The second difficulty is more serious.
Indeed, we know the $G$-equivariant KGW invariant is of the form $a_0+a_1X+\dotsb+a_{M-1}X^{M-1}$, for some integers $a_i$, and our goal would be the `non-equivariant' limit $a_0+\dotsb+a_{M-1}$, but we only have access to the complex number $a_0+a_1\zeta+\dotsb+a_{M-1}\zeta^{M-1}$, where $\zeta$ is a primitive $M$-th root of unity.
Luckily, KGW are integers, so that when $M$ is a prime number, we can sum all these complex numbers for primitive roots and obtain the KGW invariant modulo $M$.

As a conclusion, we seek for automorphisms of $X$ of prime order with isolated fixed points.
For instance, the quintic threefold can be realized as the zero locus in $\PP^4$ of the loop polynomial $x_0^4x_1+\dotsb+x_4^4x_0$ and the action
$$\zeta \cdot (x_0,\dotsc,x_4) = (\zeta x_0,\zeta^{-4} x_1,\zeta^{16}x_2,\zeta^{-64}x_3,\zeta^{256}x_4) ~,~~ \zeta := e^{\frac{2 \ci \pi}{41}}$$
yields an automorphism of $X$ of prime order $41$, whose fixed points are coordinate points.
As a result of Corollary \ref{non equiv limit}, we obtain all KGW invariants of the quintic threefold up to genus $19$ and degree $40$, modulo $41$.

\begin{rem}
It happens that $41$ is the biggest prime number $p$ for which there exists an automorphism of order $p$ for a smooth quintic hypersurface, see \cite{Oguiso}.
\end{rem}

In Proposition \ref{exa}, we provide an instance of this calculation in genus one. Precisely, we compute
$$\chi \left(\cfrac{\cO^{\vir}_{\cM_{1,0}(X,1)}}{1-q \EE^\vee}\right) \equiv (120 q^2+180 q +125) \cfrac{(1-q^4-q^6)}{(1-q^4)(1-q^6)} \in \ZZ/205\ZZ[\![q]\!],$$
where $\EE$ denotes the Hodge bundle.

Interestingly, if we can find automorphisms of prime orders for infinitely many primes and if we can handle the respective localization formulas, then we are able to determine KGW invariants as integers instead of modulo a prime number.
We apply this idea to elliptic curves.
There are indeed $p$-torsion points for every prime number $p$, so that a translation by this point is an automorphism of order $p$.
Furthermore, the localization formula is trivial since there are no fixed points.
We deduce the vanishing of all KGW invariants of an elliptic curve, with homogeneous insertions.

Similarly to GW theory, Fan--Jarvis--Ruan developped a quantum singularity theory for Landau--Ginzburg orbifolds, see \cite{FJRW1,FJRW2}. It is known as the FJRW theory and an algebraic construction has been established by Polishchuk--Vaintrob in \cite{Polish1}.
Precisely, they construct a matrix factorization over the moduli space of $(W,G)$-spin curves, where $W$ is a non-degenerate quasi-homogeneous polynomial and $G$ is an admissible group of symmetries. We refer to \cite{Guere1} for details.

In Section \ref{FJRWth}, we explain how to construct a K-theoretic version of FJRW theory and we then pursue the same goal as for KGW theory: compute invariants by applying the localization theorem under finite group actions.
We focus on the quintic polynomial with group $\mu_5$ for clarity of the exposition and we find all its K-theoretic FJRW invariants with narrow insertions in every genus and modulo $41$, see Corollary \ref{nonequiv formul}.
Here, we do not have restriction bounds on the genus of the curve.

In \cite{Guere1}, we compute genus-$0$ FJRW invariants of chain polynomials using a characteristic class $\fc_t \colon K^0(S) \to H^*(S)[\![t]\!]$, that we can define for a line bundle $L$ over a smooth DM stack $S$ as
$$\fc_t(L) = \Ch\(\lambda_{-1} L^\vee\) \Td(L) = c_1(L) \cdot \cfrac{e^{c_1(L)}-t}{e^{c_1(L)}-1},$$
and then extend it multiplicatively.
Genus-$0$ FJRW invariants of a chain polynomial $x_1^{a_1}x_2+\dotsb+x_N^{a_N}$ are then expressed as
\begin{equation}\label{lim}
c_\vir = \lim_{t \to 1} \prod_{j=1}^N \fc_{t_j}(-R\pi_*\cL_j) ~,~~ g=0,
\end{equation}
where $t_j:=t^{(-a_1) \dotsm (-a_{j-1})}$ and $R\pi_*\cL_j$ are the derived pushforwards of the  universal line bundles over the moduli space of $(W,G)$-spin curves.
It is remarkable that such a limit exists and the author wondered since then whether other limits could exist, for instance when $t$ tends to some root of unity.
Interestingly, we prove for the quintic loop polynomial with group $\mu_5$ and narrow insertions that such a limit exists for all genus when $t$ tends to a $41$-th root of unity $\zeta_{41}$.
It then converges to a $\ZZ/41\ZZ$-equivariant version of the FJRW virtual cycle, defined as follows.
The two-periodic complex obtained from the Polishchuk--Vaintrob matrix factorization naturally decouples as a direct sum of $41$ two-periodic complexes\footnote{Precisely, the $k$-th two-periodic complex is the one containing vector bundles $\cSy^{k+41 \cdot l} A_1^\vee$, with $l \in \NN$, in the notations of \cite{Guere1}.}.
Each one of them provides a (virtual) cycle $a_k$, $0 \leq k \leq 40$, and we define
$$\cvir^{\ZZ/41\ZZ} := \sum_{k=0}^{40} a_k \zeta_{41}^k = \lim_{t \to \zeta_{41}} \prod_{j=1}^N \fc_{t_j}(-R\pi_*\cL_j) ~,~~ g \geq 0.$$
We easily find similar results for other loop polynomials.

As a conclusion, we mention a future line of research.
In \cite{LG/CYquintique,LGCY}, the authors study the so-called genus-$0$ Landau--Ginzburg/Calabi--Yau (LG/CY) correspondence, which provides a striking relation between GW theory of a projective hypersurface and FJRW theory of the defining polynomial.
Following \cite{Chiodo2}, there is a similar correspondence in higher genus as well.
Since we expect the LG/CY correspondence to hold in K-theory as well, it would be interesting to probe a K-theoretic version for the quintic threefold, up to genus $19$, degree $40$, and modulo $41$.

Another question we may ask is: what information do we get on GW invariants of the quintic threefold up to genus $19$?
The quintic threefold $X$ is special, its virtual cycle (with no markings) is $0$-dimensional, so that a lot of its GW invariants vanishes.
In fact, they are all deduced from some rational numbers $n_{g,d} \in \QQ$, for non-negative integers $g$ and $d$, corresponding to its GW invariants without markings.
As a consequence, we expect some simplifications in the Hirzebruch--Riemann--Roch theorem \cite{Tonita,Givental3} and to find formulas expressing KGW invariants of $X$ in terms of $n_{g,d}$'s.
Moreover, it is proven in \cite{Honglu5,Ruan5,Li5} that all values of $n_{g,d}$ are expressed in terms of low degrees, where $5d \leq 2g-2$.
Up to genus $19$, there are exactly $61$ unknowns:
$$n_{4,1}, n_{5,1},\dotsc, n_{18,6}, n_{19,7} \in \QQ.$$
As we are able to compute all KGW invariants modulo $41$ up to genus $19$ and degree $40$, then we expect a lot of relations among these $61$ unknows.
Moreover, KGW is not restricted by a degree condition on insertions, so we can also insert K-classes from $\PP^4$, yielding indeed infinitely many relations among these $61$ unknowns. Of course, we do not know yet how many of these relations are non-trivial.
It would also be enlightening to express KGW invariants in terms of BPS numbers, which are integers as well, see \cite{phys} for a formula in genus zero.

%
%
%

~~

\noindent
\textbf{Notations.}
In this paper, we work over the complex numbers.
We denote by $G_0(X)$ the Grothendieck group of coherent sheaves on a DM stack $X$ and by $K^0(X)$ the Grothendieck ring of vector bundles on $X$.
If a linear algebraic group $G$ acts on $X$, then we denote by $G_0(G,X)$ and $K^0(G,X)$ the Grothendieck groups of $G$-equivariant coherent sheaves and vector bundles.
They are identified when $X$ is smooth, by \cite{Tho1}.
When $X$ is a point, then it equals the representation ring $R(G)$ of the group $G$.
The $G$-fixed locus inside $X$ is denoted by $X^G$.
For an element $h \in G$, we denote by $X^h$ the $h$-fixed locus.
If $V$ is a $G$-equivariant vector bundle over $X$, then we denote by $\lambda_{-1}^G(V) = \sum_{k \geq 0} (-1)^k \Lambda^k V \in K^0(G,X)$ the lambda-structure in K-theory.
When we forget the group action, we simply denote it by $\lambda_{-1}(V) \in K^0(X)$.

Let $G$ be a diagonalizable group. The complexified representation ring $R(G)_\CC := R(G) \otimes \CC$ is identified with the coordinate ring $\cO(G)$ of $G$. Hence, for every $h \in G$, there is a corresponding maximal ideal $\mathfrak{m}_h \subset R(G)_\CC$.
Denote by $G_0(G,X)_\loc := G_0(G,X)_{\mathfrak{m}_h}$ and $K^0(G,X)_\loc := K^0(G,X)_{\mathfrak{m}_h}$ the localizations.
Assume $X$ is smooth and let $\iota \colon X^h \subset X$ be the inclusion of the $h$-fixed locus.
The localization theorem says
\begin{equation}\label{loca form}
A = \iota_! \cfrac{\iota^*A}{\lambda_{-1}^G(N_\iota^\vee)} \in K^0(G,X)_\loc ~,~~ \forall A \in K^0(G,X)_\loc,
\end{equation}
see \cite{Tho2}.
It is in particular true for finite groups $G$, even though the localization map $R(G)_\CC \to R(G)_{\loc}$ is not injective in that case.
Moreover, we can relax the smoothness condition on $X$.
Indeed, if $X$ is singular but carries a $G$-equivariant perfect obstruction theory $[E_{-1} \to E_0]$, then there is a $G$-equivariant virtual structure sheaf $\cO_X^{\vir,G} \in G_0(G,X)$, see \cite{YP}.
The obstruction theory pulls-back to the $G$-fixed locus $X^G$ and we denote by $N^\vir_\iota \in K^0(G,X^G)$ the K-theoretic class of the dual of its $G$-moving part.
The $G$-fixed part gives a perfect obstruction theory on $X^G$ and yields a virtual structure sheaf $\cO^\vir_{X^G}$.
Furthermore, we have the virtual localization formula
\begin{equation}\label{vir loca form}
\cO_X^{\vir,G} = \iota_! \( \cfrac{\cO_{X^G}^\vir}{\lambda_{-1}^G({N^\vir_\iota}^\vee)} \) \in G_0(G,X)_\loc.
\end{equation}
In particular, it applies to the moduli space $\cM(\cX)$ of stable maps to a smooth DM stack $\cX$. 
Here, we specify the genus, the degree, and the number of markings as $\cM_{g,n}(\cX,\beta)$ when needed.

The letters GW stand for Gromov--Witten and KGW for K-theoretic Gromov--Witten.

\section{Equivariant quantum Lefschetz theorem}\label{QLsection}
This section is a generalization of \cite[Section 2]{Guere10} to $K$-theory and to more general group actions.
The main result is an `equivariant quantum Lefschetz' theorem which is of first importance in the next section.

\subsection{Virtual localization formula}
Let $G$ be a linear algebraic group and $\cX$ be a smooth DM stack equipped with a $G$-action.
The moduli space $\cM(\cX)$ of stable maps to $\cX$ carries a $G$-action, a $G$-equivariant perfect obstruction theory, and thus a $G$-equivariant virtual structure sheaf $\cO^{\vir,G}_{\cM(\cX)} \in G_0(G,\cM(\cX))$.

Denote by $\iota \colon \cM(\cX)^G \hookrightarrow \cM(\cX)$ the embedding of the $G$-fixed locus.
By definition, the virtual normal bundle $N_\iota^\vir \in K^0(G,\cM(X)^G)$ is the moving part of the pull-back of the perfect obstruction theory to the fixed locus
\footnote{The perfect obstruction theory can be realized as a complex of vector bundles, hence the virtual normal bundle is an element in $K^0$ rather than an element in $G_0$.}.
The virtual localization formula \eqref{vir loca form} states
$$\cO^{\vir,G}_{\cM(\cX)} = \iota_! \( \cfrac{\cO^\vir_{\cM(\cX)^G}}{\lambda^G_{-1}({N_\iota^\vir}^\vee)} \) \in G_0(G,\cM(\cX))_\loc.$$

\subsection{Enhancement of the group}\label{sec enh}
Let $G \subset T$ be an embedding of linear algebraic groups and $\cX \hookrightarrow \cP$ be an embedding of smooth DM stacks equipped with a $G$-action.
We assume that
\begin{itemize}
\item the $G$-fixed loci of $\cX$ and of $\cP$ are equal,
\item for every $G$-fixed stable map to $\cP$, all stable components of the source curve are sent to $\cP^G$,
\item $\cP$ is equipped with a $T$-action extending the $G$-action,
\item the normal bundle of $\cX \hookrightarrow \cP$ is the pull-back of a $T$-equivariant vector bundle $\cN$ over $\cP$,
\item the vector bundle $\cN$ is convex up to two markings, i.e.~for every stable map $f \colon \cC \to \cP$ where $\cC$ is a smooth genus-$0$ orbifold curve with at most two markings we have $H^1(\cC,f^*\cN)=0$.
\end{itemize}

Let us first consider the $G$-fixed loci of the moduli spaces of stable maps and observe the following fibered diagram

\begin{center}
\begin{tikzpicture}[scale=0.75]	
\node (A') at (0,1.5) {$\cM(\cX)^{G}$};
\node (B') at (3,1.5) {$\cM(\cP)^{G}$};
\node (C') at (3,0) {$\cM(\cP)$};
\node (D') at (0,0) {$\cM(\cX)$};
\draw[->] (D') -- (C');
\draw[->] (A') -- (B');
\draw[->] (A') -- (D');
\draw[->] (B') -- (C');
\draw[-] (1.5-0.1,0.75-0.1) -- (1.5-0.1,0.75+0.1) -- (1.5+0.1,0.75+0.1) -- (1.5+0.1,0.75-0.1) -- (1.5-0.1,0.75-0.1);
\node[above] at (1.5,1.5){$j$};
\node[right] at (3,0.75){$\widetilde{\iota}$};
\node[left] at (0,0.75){$\iota$};
\node[below] at (1.5,0){$\widetilde{j}$};
\end{tikzpicture}
\end{center}
Writing $i \colon \cX \hookrightarrow \cP$, we have a $G$-equivariant short exact sequence
$$0 \to T_{\cX} \to i^*T_{\cP} \to i^*\cN \to 0$$
inducing a distinguished triangle for the dual of the perfect obstruction theories of $\cM(\cX)$ and of $\cM(\cP)$
\begin{equation}\label{exactseqobstr}
R\pi_*f^*T_{\cX} \to R\pi_*f^*T_{\cP} \to R\pi_*f^*\cN \to \(R\pi_*f^*T_{\cX}\)[1].
\end{equation}

The term $\cE := R\pi_*f^*\cN$, pulled-back to $\cM(\cP)^G$, has a fixed and a moving part, that we denote respectively by $\cE_\mathrm{fix}$ and $\cE_\mathrm{mov}$.

\begin{pro}
The fixed part $\cE_\mathrm{fix}$ is a vector bundle over the fixed moduli space $\cM(\cP)^G$.
\end{pro}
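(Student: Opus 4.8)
The plan is to show that the fixed part of $R^1\pi_*f^*\cN$ vanishes, after which $\cE_{\mathrm{fix}}$ collapses to $(\pi_*f^*\cN)_{\mathrm{fix}}$, which is then locally free by Grauert's theorem. Since $G$ is linearly reductive, taking $G$-invariants is exact and commutes with base change, and the $G$-equivariant Euler characteristic of $f^*\cN$ is locally constant over $\cM(\cP)^G$. It therefore suffices to prove the pointwise vanishing $H^1(C,f^*\cN)^{\mathrm{fix}}=0$ for every $G$-fixed stable map $f\colon C\to\cP$; the constancy of $h^0(C,f^*\cN)^{\mathrm{fix}}$ then follows from the local constancy of the invariant Euler characteristic, giving local freeness of the degree-zero part.

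First I would record the key linear-algebra input. On the common fixed locus $\cP^G=\cX^G$, the conormal sequence $0\to T_\cX\to i^*T_\cP\to i^*\cN\to 0$ splits into $G$-isotypic pieces; taking fixed parts and using that the fixed part of the tangent bundle of a smooth stack is the tangent bundle of the fixed locus, the inclusion $(T_\cX)^{\mathrm{fix}}\to (T_\cP|_{\cP^G})^{\mathrm{fix}}$ becomes an isomorphism precisely because $\cX^G=\cP^G$. Hence $(\cN|_{\cP^G})^{\mathrm{fix}}=0$, i.e.\ $\cN$ is purely moving along the fixed locus.

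Next I would compute $H^1(C,f^*\cN)$ through the normalization exact sequence, writing $C$ as the union of its irreducible components glued at the nodes. By hypothesis every stable component of $C$ is contracted into $\cP^G$, while the remaining components are rational with at most two special points; in particular every node of $C$ is sent to $\cP^G$. Taking $G$-invariants of the resulting long exact sequence, the node terms are sums of fibers $(\cN_p)^{\mathrm{fix}}$ with $p\in\cP^G$, which vanish by the key input, so $H^1(C,f^*\cN)^{\mathrm{fix}}$ reduces to the invariant part of $\bigoplus H^1$ over the components. On each non-contracted rational component the convexity-up-to-two-markings hypothesis gives $H^1=0$ outright. On each contracted component $C_v$, mapped into $\cP^G$, one has $f^*\cN|_{C_v}=(f|_{C_v})^*(\cN|_{\cP^G})$, whose invariant cohomology vanishes since $\cN|_{\cP^G}$ has no fixed part and $G$ acts trivially on the structure-sheaf cohomology of the contracted locus of the universal curve. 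Assembling these contributions yields $H^1(C,f^*\cN)^{\mathrm{fix}}=0$.

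The main obstacle is the analysis of the contracted components: one must verify that the invariant part of their contribution to $H^1$ genuinely vanishes, i.e.\ that no nontrivial $G$-character of $H^1(C_v,\cO_{C_v})$ can pair with a dual character of $\cN$ to produce an invariant class. This is where the hypothesis that all stable components are contracted to $\cP^G$ is essential, together with the fact that $G$ sits inside the torus $T$ and hence acts trivially on the moduli and on the structure-sheaf cohomology of the contracted components, exactly as in the classical torus-localization picture. Once this is secured, the reduction $\cE_{\mathrm{fix}}=(\pi_*f^*\cN)_{\mathrm{fix}}$ combined with Grauert's theorem finishes the proof.
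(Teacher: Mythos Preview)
Your argument mirrors the paper's proof almost step for step: reduce to the fiberwise vanishing $H^1(C,f^*\cN)^{\mathrm{fix}}=0$, pass to the normalization, kill the node contributions and the contributions of the components mapped into $\cP^G$ using that $\cN|_{\cP^G}$ has no $G$-invariant part, and dispatch the remaining unstable rational components with the convexity-up-to-two-markings hypothesis. Your explicit derivation of $(\cN|_{\cP^G})^{\mathrm{fix}}=0$ from the equality $\cX^G=\cP^G$, and your use of Grauert's theorem to turn fiberwise vanishing into local freeness, are elaborations the paper leaves implicit.

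The one place you diverge is the justification for the contracted stable components. You claim that $G$ acts trivially on $H^1(C_v,\cO_{C_v})$ because $G\subset T$ and invoke the analogy with torus localization. That inference does not go through on $\cM(\cP)^G$: the torus $T$ does not fix $\cM(\cP)^G$ pointwise, so away from the smaller locus $\cM(\cP)^T$ there is no continuous family of automorphisms forcing $\phi_g|_{C_v}$ to be the identity, and nothing in the hypotheses rules out a nontrivial $\phi_g$-action on a contracted component. The paper does not appeal to $T$ at this step at all; it simply argues that since $\cN$ restricted to the fixed locus carries a purely nontrivial $G$-action, the same is true of $H^1$ of its pullback to any component $C^{\mathrm{fix}}_i$ landing in $\cP^G$. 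So the overall strategy is the same, but your specific reason for the key vanishing on contracted components is not the one the paper uses and does not stand on its own.
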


\begin{proof}
Let $f \colon \cC \to \cP$ be a stable map belonging to $\cM(\cP)^G$.
We denote by $\rho \colon \cC \to C$ the coarse map.
It is enough to prove
$$H^1(C,\rho_*f^* \cN)^\mathrm{fix}=0.$$

Take the normalization $\nu \colon C^\nu \to C$ of the curves at all their nodes. We have
$$C^\nu = \bigsqcup_{i \in I} C^\mathrm{fix}_i \sqcup \bigsqcup_{j \in J} C^\mathrm{nf}_j,$$
where the upper-scripts refer respectively to fixed/non-fixed components of $C^\nu$ under the map $f$.
In particular, non-fixed components are unstable curves, i.e.~the projective line with one or two special points.
By the normalization exact sequence, we obtain an exact sequence
$$\bigoplus_{\mathrm{nodes}} H^0(\mathrm{node},{f^* \cN}_{|\mathrm{node}}) \to H^1(C,f^* \cN) \to  H^1(C^\nu,\nu^*f^* \cN) \to 0,$$
with
$$H^1(C^\nu,\nu^*f^* \cN) = \bigoplus_{i \in I} H^1(C^\mathrm{fix}_i,\nu^*f^* \cN) \oplus \bigoplus_{j \in J} H^1(C^\mathrm{nf}_j,\nu^*f^* \cN).$$
Since the normal bundle has a non-trivial $G$-action once restricted to the fixed locus of $\cX$ (or equivalently of $\cP$),
then we have
$$H^0(\mathrm{node},{f^* \cN}_{|\mathrm{node}})^\mathrm{fix}=0 \quad \mathrm{and} \quad H^1(C^\mathrm{fix}_i,\nu^* f^* \cN)^\mathrm{fix}=0.$$
Therefore, it remains to see the vanishing of $H^1$ for non-fixed unstable curves $C_j^\mathrm{nf}$, $j \in J$.
The curve $C_j^\mathrm{nf}$ is isomorphic to $\PP^1$ with either one or two markings, hence $H^1(C^\mathrm{nf}_j,\nu^*f^* \cN)=0$ by our assumption of convexity up to two markings.
\end{proof}

Denote by $\cO_{\cM(\cP)^G}^\vir$ the virtual structure sheaf obtained by the $G$-fixed part of the perfect obstruction theory $R\pi_*f^*T_{\cP}$.

\begin{pro}
We have
$$j_!\cO_{\cM(\cX)^G}^\vir = \lambda^G_{-1}\(\cE_\mathrm{fix}^\vee\) \otimes \cO_{\cM(\cP)^G}^\vir \in G_0(\cM(\cP)^G).$$
Furthermore, in the localized equivariant K-theoretic ring, we have
$$\cfrac{1}{\lambda^G_{-1}\({N^\vir_\iota}^\vee\)} = j^* \(\cfrac{\lambda^G_{-1}({\cE_\mathrm{mov}}^\vee)}{\lambda^G_{-1}({N^\vir_{\widetilde{\iota}}}^\vee)}\) \in K^0(G,\cM(\cP)^G)_\loc.$$
\end{pro}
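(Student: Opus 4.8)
My plan is to treat the two identities separately: the second is a formal manipulation of the distinguished triangle \eqref{exactseqobstr} together with the multiplicativity of $\lambda_{-1}$, while the first is a K-theoretic Koszul (quantum Lefschetz) statement carrying all the geometric content. I would dispatch the second first, as a warm-up.

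For the second identity, restrict \eqref{exactseqobstr} to $\cM(\cX)^G$ along $\iota$. Using $\widetilde j\circ\iota=\widetilde\iota\circ j$ from the fibered diagram, its middle and right terms become $j^*\widetilde\iota^*\(R\pi_*f^*T_\cP\)$ and $j^*\widetilde\iota^*\cE$, so in $K^0(G,\cM(\cX)^G)$ the triangle gives the class identity
$$\iota^*\(R\pi_*f^*T_\cX\)=j^*\widetilde\iota^*\(R\pi_*f^*T_\cP\)-j^*\widetilde\iota^*\cE.$$
Taking $G$-moving parts, an exact operation that commutes with $j^*$, and using the definition of the virtual normal bundles yields
$$N^\vir_\iota+j^*\cE_\mathrm{mov}=j^*N^\vir_{\widetilde\iota}.$$
Applying the multiplicative class $\lambda^G_{-1}\((-)^\vee\)$ converts this sum into a product; since $\lambda^G_{-1}\({N^\vir_\iota}^\vee\)$ and $\lambda^G_{-1}\({N^\vir_{\widetilde\iota}}^\vee\)$ are units in the localized ring, solving for the former gives exactly the asserted formula. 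The only non-bookkeeping input is the invertibility of these Euler classes after localizing at $\mathfrak{m}_h$, which is built into the localization theorem.

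For the first identity, I would exhibit $j\colon\cM(\cX)^G\hookrightarrow\cM(\cP)^G$ as a virtual regular embedding with virtual normal bundle $j^*\cE_\mathrm{fix}$ and then apply the K-theoretic self-intersection formula. Taking $G$-fixed parts in \eqref{exactseqobstr} and pulling back to $\cM(\cX)^G$ produces a distinguished triangle
$$\(R\pi_*f^*T_\cX\)^\mathrm{fix}\to j^*\(R\pi_*f^*T_\cP\)^\mathrm{fix}\to j^*\cE_\mathrm{fix}\to\(R\pi_*f^*T_\cX\)^\mathrm{fix}[1]$$
whose outer terms are the duals of the perfect obstruction theories defining $\cO^\vir_{\cM(\cX)^G}$ and $\cO^\vir_{\cM(\cP)^G}$ and whose cone is, by the preceding proposition, the vector bundle $j^*\cE_\mathrm{fix}$. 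Geometrically this presents $\cM(\cX)^G$ as the zero locus in $\cM(\cP)^G$ of the section of $\cE_\mathrm{fix}$ induced by the equation $s$ cutting out $\cX\subset\cP$, carrying the compatible induced obstruction theory. The Koszul resolution $\Lambda^\bullet(j^*\cE_\mathrm{fix})^\vee$ of $j_*\cO_{\cM(\cX)^G}$, together with this compatibility, then gives
$$j_!\cO^\vir_{\cM(\cX)^G}=\lambda^G_{-1}\(\cE_\mathrm{fix}^\vee\)\otimes\cO^\vir_{\cM(\cP)^G}.$$

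I expect the main obstacle to be this last step: promoting the obstruction-theory triangle to the equivariant K-theoretic Koszul identity. One must check that the $G$-fixed part of \eqref{exactseqobstr} genuinely realizes $j$ as a virtual regular embedding, i.e.\ that the obstruction theory induced on $\cM(\cX)^G$ from $\cM(\cP)^G$ by passing to the zero locus of the section agrees with the intrinsic one, and then invoke functoriality of virtual structure sheaves under such section-defined embeddings (the K-theoretic analogue of the Kim--Kresch--Pantev quantum Lefschetz principle, via K-theoretic virtual pullback). The cohomological version of this is classical; the care needed here is to transport the $G$-action through every step, to keep track of the character by which $s$ transforms, and to ensure that the Koszul complex computes $j_!\cO^\vir_{\cM(\cX)^G}$ exactly rather than up to lower virtual strata. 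By contrast the second identity presents no difficulty once the moving part has been extracted.
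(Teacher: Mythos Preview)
Your proposal is correct and follows essentially the same route as the paper: the second identity is obtained from the $G$-moving part of the triangle \eqref{exactseqobstr} and multiplicativity of $\lambda^G_{-1}$, while the first identity comes from realizing $\cM(\cX)^G$ as the zero locus of a section of $\cE_\mathrm{fix}$ over $\cM(\cP)^G$, invoking compatibility of obstruction theories via the $G$-fixed part of \eqref{exactseqobstr}, and then applying functoriality of virtual structure sheaves together with the projection formula and the Koszul resolution. The only point the paper makes more explicit than you do is the construction of the section: it is obtained from the map to the direct image cone $\pi_*f^*\cN$ in the sense of Chang--Li \cite{Li}, whose $G$-fixed part is precisely $\cE_\mathrm{fix}$, yielding the fibered square you anticipated.
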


\begin{proof}
It follows from the standard proof using convexity and we recall here the main arguments.
	
The variety $\cX$ is the zero locus of a section of the vector bundle $\cN$ over the ambient space $\cP$.
This section induces a map $s$ from the moduli space of stable maps to $\cP$ to the direct image cone $\pi_*f^*\cN$, see \cite[Definition $2.1$]{Li}.
Since the moduli space $\cM(\cP)^G$ is fixed by the action of $G$, then it maps to the fixed part of the direct image cone, that is the vector bundle $\cE_\mathrm{fix}$.
Hence we have the fibered diagram
\begin{center}
	\begin{tikzpicture}[scale=0.75]	
	\node (A') at (0.5+6+0,1.5) {$\cM(\cX)^G$};
	\node (B') at (-0.5+1+6+4.5,1.5) {$\cM(\cP)^G$};
	\node (C') at (-0.5+1+6+4.5,0) {$\cE_\mathrm{fix}$};
	\node (D') at (0.5+6+0,0) {$\cM(\cP)^G$};
	\draw[->] (D') -- (C');
	\draw[->] (A') -- (B');
	\draw[->] (A') -- (D');
	\draw[->] (B') -- (C');
	\draw[-] (1.5+6+1.25-0.1,0.75-0.1) -- (1.5+6+1.25-0.1,0.75+0.1) -- (1.5+6+1.25+0.1,0.75+0.1) -- (1.5+6+1.25+0.1,0.75-0.1) -- (1.5+6+1.25-0.1,0.75-0.1);
	\node[above] at (6.25+1.25+1.3,1.5){$j$};
	\node[right] at (6.5+2.5+2,0.75){$s$};
	\node[below] at (6.25+1.25+1.3,0){$0$};
	\end{tikzpicture}
\end{center}
where the bottom map is the embedding as the zero section.
The fixed part of the distinguished triangle \eqref{exactseqobstr} gives a compatibility datum of perfect obstruction theories for the fixed moduli spaces.
Functoriality of the virtual structure sheaf gives
$$j^! \cO_{\cM(\cP)^G}^\vir = \cO_{\cM(\cX)^G}^\vir.$$
Applying the projection formula via the map $j$ on both sides and using the Koszul resolution gives the first result.
The second part of the statement follows from the moving part of the distinguished triangle \eqref{exactseqobstr}.
\end{proof}

Eventually, by the virtual localization formula, the $G$-equivariant virtual structure sheaf satisfies
\begin{eqnarray*}
\widetilde{j}_! \cO_{\cM(\cX)}^{\vir,G} & = & \widetilde{j}_!\iota_! \left( \cfrac{\cO_{\cM(\cX)^G}^\vir}{\lambda^G_{-1} ({N^\vir_\iota}^\vee)} \right) \\
& = & \widetilde{\iota}_!j_! \left(\cO_{\cM(\cX)^G}^\vir \otimes j^* \(\cfrac{\lambda^G_{-1}({\cE_\mathrm{mov}}^\vee)}{\lambda^G_{-1}({N^\vir_{\widetilde{\iota}}}^\vee)}\) \right) \\
& = & \widetilde{\iota}_! \left(\lambda^G_{-1}\({\cE_\mathrm{fix}}^\vee\) \otimes \cO_{\cM(\cP)^G}^\vir \otimes  \cfrac{\lambda^G_{-1}({\cE_\mathrm{mov}}^\vee)}{\lambda^G_{-1}({N^\vir_{\widetilde{\iota}}}^\vee)} \right) \\
& = & \widetilde{\iota}_! \left(\lambda^G_{-1}\(\cE^\vee\) \otimes \cfrac{\cO_{\cM(\cP)^G}^\vir}{\lambda^G_{-1}({N^\vir_{\widetilde{\iota}}}^\vee)} \right), \\
\end{eqnarray*}
where equalities happen in $G_0(G,\cM(\cP))_\loc$.
\begin{rem}\label{defafterloc}
If it were defined, the right-hand side would equal
$$\lambda^G_{-1}\(R\pi_*f^*\cN\)^\vee \otimes \cO_{\cM(\cP)}^{\vir,G},$$
using the virtual localization formula, but it is not clear that the $G$-lambda class of $R\pi_*f^*\cN$ is defined in $G_0(G,\cM(\cP))_\loc$.
However, we say that $\lambda^G_{-1}\(R\pi_*f^*\cN\)^\vee$ is defined after localization\footnote{We find this definition for the formal quintic, see \cite{Lho}.} to mean that its pull-back to the fixed locus is defined.
\end{rem}

Now, we aim to extend the right-hand side of the equality to the $T$ action.
The inclusion of groups $G \hookrightarrow T$ yields a morphism
$$\xi_* \colon G_0(T,\cM(\cP)) \to G_0(G,\cM(\cP)),$$
under which we get
$$\xi_* \left( \cO_{\cM(\cP)}^{\vir,T} \right) = \cO_{\cM(\cP)}^{\vir,G}.$$
Unfortunately, the map $\xi_*$ is only partially defined when we localize equivariant parameters: 
the denominators could be non-zero in the $T$-localization but vanish in the $G$-localization.
It is easier to work out this issue on the fixed locus of the moduli space.

Let $\cM(\cP)^T \hookrightarrow \cM(\cP)$ denote the $T$-fixed locus of the moduli space. In particular, we have the inclusion $\widehat{\iota} \colon \cM(\cP)^T \hookrightarrow \cM(\cP)^G$.
We notice that the moduli space $\cM(\cP)^G$ is stable under the $T$-action from $\cM(\cP)$ and that the map $\widehat{\iota}$ is $T$-equivariant.
Moreover, we have a $T$-equivariant virtual structure sheaf
$$\cO_{\cM(\cP)^G}^{\vir,T} \in G_0(T,\cM(\cP)^G)$$
and the equality $$\xi_*\(\cO_{\cM(\cP)^G}^{\vir,T}\) = \cO_{\cM(\cP)^G}^{\vir} \in G_0(G,\cM(\cP)^G).$$

By the virtual localization formula, we have
$$\cO_{\cM(\cP)^G}^{\vir,T} = \widehat{\iota}_! \(\cfrac{\cO_{\cM(\cP)^T}^\vir}{\lambda^T_{-1}({N^\vir_{\widehat{\iota}}}^\vee)} \) \in G_0(T,\cM(\cP)^G)_\loc.$$
Furthermore, we have the following equality in K-theory on the space $\cM(\cP)^T$
\begin{equation}\label{virtnormbund}
N^\vir_{\widetilde{\iota} \circ \widehat{\iota}} = \widehat{\iota}^*N^\vir_{\widetilde{\iota}} + N^\vir_{\widehat{\iota}}.
\end{equation}
Indeed, let $\cF$ be the pull-back of $R\pi_*f^*T\cP$ to $\cM(\cP)^T$. By definition, the virtual normal bundle $N^\vir_{\widetilde{\iota} \circ \widehat{\iota}}$ is the $T$-moving part $\cF^\mathrm{mov}$, which decomposes as $\cF^{\mathrm{mov}} = \cF^{\mathrm{mov}}_\mathrm{fix} + \cF^{\mathrm{mov}}_\mathrm{mov}$, where the subscript denotes the $G$-fixed/moving part.
By definition, the virtual normal bundle $\widehat{\iota}^*N^\vir_{\widetilde{\iota}}$ is the $G$-moving part of $\cF$, i.e.~$\cF^{\mathrm{mov}}_\mathrm{mov}$ since there is no $G$-moving $T$-fixed part in $\cF$.
Eventually, the virtual normal bundle $N^\vir_{\widehat{\iota}}$ identifies with $\cF^{\mathrm{mov}}_\mathrm{fix}$.

\begin{rem}
The virtual normal bundle $N^\vir_{\widetilde{\iota}}$ is defined on $\cM(\cP)^G$ and we have a well-defined equality
$$\xi_*\(\lambda^T_{-1}\({N^\vir_{\widetilde{\iota}}}^\vee\)^{-1}\) = \lambda^G_{-1}\({N^\vir_{\widetilde{\iota}}}^\vee\)^{-1} \in K^0(G,\cM(\cP)^G)_\loc.$$
We also have seen the $G$-decomposition $\cE = \cE_\mathrm{fix} + \cE_\mathrm{mov}$ over $\cM(\cP)^G$ with $\cE_\mathrm{fix}$ being a $T$-equivariant vector bundle.
Indeed, the vector bundle $\cN$ over $\cP$ is $T$-equivariant, thus so are $\cE$ and $\cE_\mathrm{fix}$.
As a consequence, the following equality is well-defined
$$\xi_*\(\lambda^T_{-1}(\cE)\) = \lambda^G_{-1}(\cE) \in K^0(G,\cM(\cP)^G)_\loc.$$
\end{rem}

\begin{pro}\label{proploc}
Consider the well-defined class
$$C_T :=\widehat{\iota}^*\(\lambda^T_{-1}\(\cE^\vee\)\) \otimes \cfrac{\cO_{\cM(\cP)^T}^\vir}{\lambda^T_{-1}({N^\vir_{\widetilde{\iota} \circ \widehat{\iota}}}^\vee)} \in G_0(T,\cM(\cP)^T)_\loc.$$
Then its push-forward under the inclusion $\widehat{\iota}$ equals
$$\widehat{\iota}_!\(C_T\) = \lambda_{-1}^T\(\cE^\vee\) \otimes \cfrac{\cO_{\cM(\cP)^G}^{\vir,T}}{\lambda_{-1}^T({N^\vir_{\widetilde{\iota}}}^\vee)} \in G_0(T,\cM(\cP)^G)_\loc.$$
In particular, we have
$$\xi_*\(\widehat{\iota}_!\(C_T\)\) = \lambda_{-1}^G\(\cE^\vee\) \otimes \cfrac{\cO_{\cM(\cP)^G}^\vir}{\lambda_{-1}^G({N^\vir_{\widetilde{\iota}}}^\vee)} \in G_0(G,\cM(\cP)^G)_\loc.$$
\end{pro}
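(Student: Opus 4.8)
The plan is to derive both displayed equalities directly from the virtual localization formula applied to the inclusion $\widehat{\iota} \colon \cM(\cP)^T \hookrightarrow \cM(\cP)^G$, combined with the additivity \eqref{virtnormbund} of virtual normal bundles under composition and the multiplicativity of $\lambda_{-1}$. The second equality is then a formal consequence of applying the restriction map $\xi_*$ and invoking the three compatibilities recorded in the remarks above.

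For the first equality I would start from the definition of $C_T$ and push it forward along $\widehat{\iota}$. Since $\widehat{\iota}^*\(\lambda^T_{-1}\(\cE^\vee\)\)$ is the pull-back of a class living on $\cM(\cP)^G$, the projection formula lets me pull the factor $\lambda^T_{-1}\(\cE^\vee\)$ out of $\widehat{\iota}_!$. It then remains to compute $\widehat{\iota}_!$ of $\cO_{\cM(\cP)^T}^\vir / \lambda^T_{-1}({N^\vir_{\widetilde{\iota} \circ \widehat{\iota}}}^\vee)$. Dualizing \eqref{virtnormbund} and using multiplicativity gives the factorization $\lambda^T_{-1}({N^\vir_{\widetilde{\iota} \circ \widehat{\iota}}}^\vee) = \widehat{\iota}^*\lambda^T_{-1}({N^\vir_{\widetilde{\iota}}}^\vee) \cdot \lambda^T_{-1}({N^\vir_{\widehat{\iota}}}^\vee)$, so the denominator splits into a pulled-back piece and a piece intrinsic to $\widehat{\iota}$. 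A second application of the projection formula pulls the factor $1/\lambda^T_{-1}({N^\vir_{\widetilde{\iota}}}^\vee)$ out of $\widehat{\iota}_!$, and the remaining expression $\widehat{\iota}_!\(\cO_{\cM(\cP)^T}^\vir / \lambda^T_{-1}({N^\vir_{\widehat{\iota}}}^\vee)\)$ is exactly $\cO_{\cM(\cP)^G}^{\vir,T}$ by the virtual localization formula recalled just before the proposition. Assembling these factors yields the claimed right-hand side.

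For the second equality I would apply $\xi_*$ to the first one. Since $\xi_*$ is multiplicative and compatible with the module structure, it distributes over the tensor products, and the three identities recalled in the preceding remarks convert each $T$-equivariant factor into its $G$-equivariant counterpart, namely $\xi_*\(\lambda^T_{-1}\(\cE^\vee\)\) = \lambda^G_{-1}\(\cE^\vee\)$, $\xi_*\(\cO_{\cM(\cP)^G}^{\vir,T}\) = \cO_{\cM(\cP)^G}^{\vir}$, and $\xi_*\(\lambda^T_{-1}({N^\vir_{\widetilde{\iota}}}^\vee)^{-1}\) = \lambda^G_{-1}({N^\vir_{\widetilde{\iota}}}^\vee)^{-1}$.

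The delicate point, and the reason the statement is phrased on the fixed loci rather than on $\cM(\cP)$ itself, is the well-definedness in the localized rings: the map $\xi_*$ is only partially defined after localization, since a denominator invertible in the $T$-localization may vanish in the $G$-localization. Working on $\cM(\cP)^T$ circumvents this, because there every denominator $\lambda^T_{-1}({N^\vir}^\vee)$ involves only moving parts and is therefore genuinely invertible in $G_0(T,\cM(\cP)^T)_\loc$, its image under $\xi_*$ remaining invertible by the remark. I expect the main care to go into checking that each application of the projection formula and of $\xi_*$ respects these localizations, that is, that no step secretly requires inverting a class which becomes a zero-divisor, rather than into the algebra itself, which is routine once the factorization of the denominator via \eqref{virtnormbund} is in hand.
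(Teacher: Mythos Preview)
Your proposal is correct and follows essentially the same argument as the paper: pull $\lambda^T_{-1}(\cE^\vee)$ out via the projection formula, factor the denominator using \eqref{virtnormbund}, pull out $1/\lambda^T_{-1}({N^\vir_{\widetilde{\iota}}}^\vee)$ again by projection, and identify the remaining pushforward with $\cO_{\cM(\cP)^G}^{\vir,T}$ by virtual localization; the second equality then follows from the multiplicativity of $\xi_*$ on classes whose specializations are already well-defined. Your closing paragraph on well-definedness is a reasonable gloss on why the argument is set up on the fixed loci, though the paper treats this point more tersely.
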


\begin{proof}
By the virtual localization above and Equation \eqref{virtnormbund}, we have
\begin{eqnarray*}
\widehat{\iota}_!\(C_T\) & = & \lambda_{-1}^T\(\cE^\vee\) \otimes \widehat{\iota}_! \(\cfrac{\cO_{\cM(\cP)^T}^\vir}{\lambda_{-1}^T({N^\vir_{\widetilde{\iota} \circ \widehat{\iota}}}^\vee)} \) \\
& = & \lambda_{-1}^T\(\cE^\vee\) \otimes \widehat{\iota}_! \(\cfrac{\cO_{\cM(\cP)^T}^\vir}{\widehat{\iota}^*\(\lambda_{-1}^T({N^\vir_{\widetilde{\iota}}}^\vee)\) \otimes \lambda_{-1}^T({N^\vir_{\widehat{\iota}}}^\vee)} \) \\
& = & \cfrac{\lambda_{-1}^T\(\cE^\vee\)}{\lambda_{-1}^T({N^\vir_{\widetilde{\iota}}}^\vee)} \otimes \widehat{\iota}_! \(\cfrac{\cO_{\cM(\cP)^T}^\vir}{\lambda_{-1}^T({N^\vir_{\widehat{\iota}}}^\vee)} \) \\
& = & \cfrac{\lambda_{-1}^T\(\cE^\vee\) \otimes \cO_{\cM(\cP)^G}^{\vir,T}}{\lambda_{-1}^T({N^\vir_{\widetilde{\iota}}}^\vee)}. \\
\end{eqnarray*}
The last sentence follows from the following property of $\xi_*$. Let $Z$ be a DM stack with a $T$-action and take $A \in K^0(T,Z)_\loc$, $B \in G_0(T,Z)_\loc$, $a \in K^0(G,Z)_\loc$, and $b \in G_0(G,Z)_\loc$.
If $\xi_*(A)=a$ and $\xi_*(B)=b$ are well-defined equalities, then $\xi_*(A \otimes B)$ is well-defined and equals the localized class $a \otimes b$.
\end{proof}

Eventually, the push-forward maps $\widetilde{\iota}_!$ and $\xi_!$ commute when the later is well-defined. Precisely, the map $\widetilde{\iota}$ is $T$-equivariant and for any localized class $C \in G_0(T,\cM(\cP)^G)_\loc$ such that $\xi_*(C)$ is well-defined in $G_0(G,\cM(\cP)^G)_\loc$, then the localized class $\widetilde{\iota}_! (C)$ is well-defined under $\xi_*$ and we have
$$\widetilde{\iota}_! \xi_*(C) = \xi_* \widetilde{\iota}_! (C) \in G_0(G,\cM(\cP))_\loc.$$

\subsection{Equivariant quantum Lefschetz formula}
Summarizing our discussion, we obtain the following.

\begin{thm}[Equivariant quantum Lefschetz]\label{quantum Lefschetz}
Let $\cX \hookrightarrow \cP$ be a $G$-equivariant embedding of smooth DM stacks satisfying assumptions listed at the beginning of this section.
Then we have
$$\widetilde{j}_!\cO_{\cM(\cX)}^{\vir,G}=\xi_* \( \lambda_{-1}^T\(R\pi_*f^*\cN\)^\vee \otimes \cO_{\cM(\cP)}^{\vir,T} \) \in G_0(G,\cM(\cP))_\loc,$$
where $\widetilde{j}$ is the embedding of moduli spaces and $\xi_*$ is the specialization of $T$-equivariant parameters into $G$-equivariant parameters.
Here, the $T$-equivariant lambda class $\lambda_{-1}^T (R \pi_*f^*\cN)^\vee$ is defined after localization, see Remark \ref{defafterloc}.
\end{thm}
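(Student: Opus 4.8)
The plan is to read the theorem as the terminal identity of the chain of comparisons built in Section~\ref{sec enh}, so that the proof amounts to assembling the two Propositions and the virtual localization formula while tracking in which localized Grothendieck group each equality holds. First I would record the preliminary fact that $\cE_\mathrm{fix}$ is a vector bundle (first Proposition), since this legitimizes the Koszul resolution used below. Starting from the virtual localization formula \eqref{vir loca form} for the $G$-action on $\cM(\cX)$ and pushing it forward along $\widetilde{j}$, I would substitute the two identities of the Proposition comparing $\cM(\cX)^G$ with $\cM(\cP)^G$. Running through the fibered square reproduces the displayed chain of equalities in $G_0(G,\cM(\cP))_\loc$, ending at
$$\widetilde{j}_!\cO_{\cM(\cX)}^{\vir,G} = \widetilde{\iota}_!\(\lambda^G_{-1}(\cE^\vee) \otimes \cfrac{\cO_{\cM(\cP)^G}^\vir}{\lambda^G_{-1}({N^\vir_{\widetilde{\iota}}}^\vee)}\),$$
with $\cE = R\pi_*f^*\cN$. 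At this stage the left-hand side is expressed purely through $G$-equivariant data supported on the fixed locus $\cM(\cP)^G$.

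The next step is to promote the parenthetical class from the $G$-localization to the $T$-localization. This is precisely the content of Proposition~\ref{proploc}: the class $\lambda^G_{-1}(\cE^\vee) \otimes \cO_{\cM(\cP)^G}^\vir / \lambda^G_{-1}({N^\vir_{\widetilde{\iota}}}^\vee)$ is the image under $\xi_*$ of $\widehat{\iota}_!(C_T)$, a genuinely $T$-localized class constructed on the finer $T$-fixed locus $\cM(\cP)^T$. I would then invoke the stated compatibility of $\widetilde{\iota}_!$ with $\xi_*$, valid whenever the specialization is well-defined, to move $\xi_*$ outside the pushforward and obtain
$$\widetilde{j}_!\cO_{\cM(\cX)}^{\vir,G} = \xi_*\(\widetilde{\iota}_! \widehat{\iota}_!(C_T)\) \in G_0(G,\cM(\cP))_\loc.$$

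It remains to recognise the $T$-equivariant class $\widetilde{\iota}_!\widehat{\iota}_!(C_T)$ as $\lambda_{-1}^T(\cE^\vee) \otimes \cO_{\cM(\cP)}^{\vir,T}$. Unwinding the definition of $C_T$, the projection formula for $\widehat{\iota}$ factors out $\lambda^T_{-1}(\cE^\vee)$, while Equation~\eqref{virtnormbund} and the virtual localization formula for the $T$-action on $\cM(\cP)$ along $\widetilde{\iota}\circ\widehat{\iota}$ reassemble $\cO_{\cM(\cP)^T}^\vir / \lambda^T_{-1}({N^\vir_{\widetilde{\iota}\circ\widehat{\iota}}}^\vee)$ into $\cO_{\cM(\cP)}^{\vir,T}$. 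This gives exactly the asserted formula, with $\lambda^T_{-1}(\cE^\vee) = \lambda^T_{-1}(R\pi_*f^*\cN)^\vee$.

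The main obstacle, and the reason the detour through $\cM(\cP)^T$ cannot be avoided, is that the target expression $\lambda_{-1}^T(R\pi_*f^*\cN)^\vee$ is not a priori an element of $G_0(T,\cM(\cP))_\loc$: the derived pushforward is a difference of bundles, so $\lambda_{-1}$ of its dual requires inverting the $\lambda_{-1}$-class of a summand that becomes invertible only after localizing the equivariant parameters, and its $T$-fixed part is genuinely present. This is what ``defined after localization'' of Remark~\ref{defafterloc} encodes. The care needed throughout is therefore to read every identity on the fixed locus where the relevant moving parts are invertible, and to verify at each use of $\xi_*$ that one is not specializing a denominator that is invertible over $T$ but vanishing over $G$; once all classes are carried down to $\cM(\cP)^T$, where both the $G$- and the $T$-moving parts are invertible, the specializations are unambiguous and the equalities hold as stated.
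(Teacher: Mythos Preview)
Your proposal is correct and follows the paper's own argument essentially step for step: first the displayed chain of equalities reducing $\widetilde{j}_!\cO_{\cM(\cX)}^{\vir,G}$ to $\widetilde{\iota}_!\bigl(\lambda^G_{-1}(\cE^\vee)\otimes\cO^{\vir}_{\cM(\cP)^G}/\lambda^G_{-1}({N^\vir_{\widetilde{\iota}}}^\vee)\bigr)$, then Proposition~\ref{proploc} together with the commutation of $\widetilde{\iota}_!$ and $\xi_*$ to rewrite this as $\xi_*\,\widetilde{\iota}_!\widehat{\iota}_!(C_T)$, and finally the identification with the right-hand side via virtual localization along $\widetilde{\iota}\circ\widehat{\iota}$ and the projection formula. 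The only cosmetic point is that in your third paragraph you speak of ``recognising'' $\widetilde{\iota}_!\widehat{\iota}_!(C_T)$ as $\lambda_{-1}^T(R\pi_*f^*\cN)^\vee\otimes\cO_{\cM(\cP)}^{\vir,T}$, whereas the paper is explicit that this equality is the \emph{definition} of the right-hand side (Remark~\ref{defafterloc}); you clearly understand this from your final paragraph, so no change of substance is needed.
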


\begin{proof}
Using previous equalities, we get
\begin{eqnarray*}
\widetilde{j}_!\cO_{\cM(\cX)}^{\vir,G}
& = & \widetilde{\iota}_! \left(\lambda_{-1}^G\(\cE^\vee\) \otimes \cfrac{\cO_{\cM(\cP)^G}^\vir}{\lambda_{-1}^G({N^\vir_{\widetilde{\iota}}}^\vee)} \right), \\
& = & \xi_* \widetilde{\iota}_! \widehat{\iota}_! \(  \widehat{\iota}^*\(\lambda_{-1}^T\(\cE^\vee\)\) \otimes \cfrac{\cO_{\cM(\cP)^T}^\vir}{\lambda_{-1}^T({N^\vir_{\widetilde{\iota} \circ \widehat{\iota}}}^\vee)} \).
\end{eqnarray*}
Following Remark \ref{defafterloc}, the meaning of `defined after localization' is precisely
\begin{eqnarray*}
\xi_* \( \lambda_{-1}^T\(R\pi_*f^*\cN\)^\vee \otimes \cO_{\cM(\cP)}^{\vir,T} \) & = & \xi_* \( \lambda_{-1}^T\(R\pi_*f^*\cN\)^\vee \otimes \widetilde{\iota}_! \widehat{\iota}_! \(\cfrac{\cO_{\cM(\cP)^T}^\vir}{\lambda_{-1}^T({N^\vir_{\widetilde{\iota} \circ \widehat{\iota}}}^\vee)}  \) \)\\
& = & \xi_* \widetilde{\iota}_! \widehat{\iota}_! \(  \widehat{\iota}^*\(\lambda_{-1}^T\(\cE^\vee\)\) \otimes \cfrac{\cO_{\cM(\cP)^T}^\vir}{\lambda_{-1}^T({N^\vir_{\widetilde{\iota} \circ \widehat{\iota}}}^\vee)} \).
\end{eqnarray*}
\end{proof}

\section{K-theoretic Gromov--Witten theory}
\subsection{Automorphisms of loop hypersurfaces}\label{loop}
Let $X$ be a smooth degree-$d$ hypersurface in $\PP^N$.
K-theoretic Gromov--Witten (KGW) theory is invariant under smooth deformations, so that we can choose any degree-$d$ homogeneous polynomial $P$ to define $X$, as long as it satisfies the jacobian criterion for smoothness.
Here, we will focus on loop polynomials, i.e.~we take
$$X := \left\lbrace x_0^{d-1}x_1 + \dotsb + x_N^{d-1}x_0 = 0 \right\rbrace \subset \PP^N.$$


Let $\overline{M}:=\cfrac{|1-(1-d)^{N+1}|}{d}$ and consider on $\PP^N$ the $\ZZ/\overline{M}\ZZ$-action
$$\zeta \cdot (x_0,\dotsc,x_N) = (x_0, \zeta x_1, \zeta^{u_2}x_2,\dotsc,\zeta^{u_N}x_N),$$
where $u_0:=0$ and $u_{j+1} := 1-(d-1) u_j$.
We have $(d-1) u_N \equiv 1$ modulo $\overline{M}$, so that the hypersurface $X$ is $\ZZ/\overline{M}\ZZ$-invariant.
Explicitely, we have
$$u_j = \sum_{l=0}^{j-1} (1-d)^l \quad \textrm{and} \quad \overline{M} = \sum_{l=0}^N (1-d)^l.$$

\begin{nt}
Let $\overline{M}_1:=\overline{M}$ and $\overline{M}_{j+1}:=\cfrac{\overline{M}_j}{u_{j+1} \wedge \overline{M}_j}$ for $1 \leq j <N$.
We denote $M:=\overline{M}_N$ and $G:=\ZZ/M\ZZ$.
\end{nt}


\begin{pro}\label{fixloccond}
The group $G$ acts on $\PP^N$, leaves the hypersurface $X$ invariant, and for all non-zero $g \in G$, the $g$-fixed locus in $\PP^N$ consists of all coordinate points.
Furthermore, assuming the Calabi--Yau condition $N+1=d$ and assuming $d$ is a prime number, then we have $M=\overline{M}$. It holds in particular for the quintic hypersurface in $\PP^4$.
\end{pro}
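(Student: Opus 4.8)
The plan is to reduce all three assertions to arithmetic of the weights $u_j$ and the integer $M$, and to treat them in increasing order of difficulty. I would first dispose of the action and the invariance of $X$: since $M\mid\overline{M}$, the weights $u_0,\dots,u_N$ descend to a homomorphism $\ZZ/M\ZZ\to\mathrm{PGL}_{N+1}$, which is faithful because $u_1-u_0=1$ already forces its kernel to be trivial. The recursion $u_{j+1}=1-(d-1)u_j$ gives $(d-1)u_j+u_{j+1}=1$ for $0\le j<N$, so every monomial $x_j^{d-1}x_{j+1}$ of the loop polynomial is scaled by the single character $\zeta$; for the closing term $x_N^{d-1}x_0$ one invokes $(d-1)u_N\equiv 1\pmod{\overline{M}}$, hence $\pmod{M}$. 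The defining polynomial is thus a $\zeta$-eigenvector and $X$ is preserved.

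For the fixed-point statement, a point of $\PP^N$ is fixed by $g=\zeta^k$ (with $k\not\equiv 0\bmod M$) precisely when all its nonzero coordinates carry equal weights modulo $M$; hence the $g$-fixed locus is the set of coordinate points for every nonzero $g$ if and only if $\gcd(u_j-u_{j'},M)=1$ for all $j\ne j'$. Using $u_j=(1-(1-d)^j)/d$ one obtains the factorisation $u_j-u_{j'}=(1-d)^{j'}u_{j-j'}$, so it suffices to establish $\gcd(1-d,M)=1$ and $\gcd(u_m,M)=1$ for $1\le m\le N$. The former is immediate, since $1-d\equiv 0\pmod{d-1}$ yields $\overline{M}\equiv 1\pmod{d-1}$ and $M\mid\overline{M}$.

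The coprimality $\gcd(u_m,M)=1$ is the crux, and it is precisely what the recursive definition of $M$ is built to guarantee; this is where I expect the real work. Working one prime $p$ at a time, I would telescope the recursion $\overline{M}_{j+1}=\overline{M}_j/\gcd(u_{j+1},\overline{M}_j)$ into the valuation identity $v_p(M)=\max\!\big(0,\,v_p(\overline{M})-\sum_{i=2}^N v_p(u_i)\big)$. Writing $q=1-d$ and $e=\mathrm{ord}_p(q)$, and noting that $p\mid\overline{M}$ forces $p\nmid q$ and $e\mid N+1$, a lifting-the-exponent computation shows that as soon as some $u_m$ with $m\le N$ is divisible by $p$ one has $\sum_{i=2}^N v_p(u_i)\ge v_p(\overline{M})$, whence $p\nmid M$; if no such $u_m$ exists there is nothing to check. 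The delicate point, which I expect to be the main obstacle, is $p=2$, where the exponent-lifting lemma takes its exceptional shape; this occurs only when $d$ is even and $N$ odd and has to be treated by hand.

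Finally, under the Calabi--Yau and primality hypotheses the analysis collapses. Here $d$ is an odd prime (the quadric $d=2$ being degenerate), so $\overline{M}$ is odd and $p=2$ never intervenes. For any prime $p\mid\overline{M}$ one has either $p=d$, and then $u_m\equiv m\not\equiv 0\pmod{d}$ for $1\le m\le d-1$, or $p\ne d$, and then $e\mid N+1=d$ forces $e=d$ (the alternative $e=1$ would give $p\mid d$), so $e\nmid m$ for $m\le d-1$; in both cases $p\nmid u_m$. Hence $\gcd(u_m,\overline{M})=1$ for every $m$, every gcd occurring in the recursion equals $1$, and therefore $M=\overline{M}$. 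The quintic, with $d=5$, $N=4$ and $\overline{M}=205$, is the instance of primary interest.
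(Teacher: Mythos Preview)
Your reduction via the factorisation $u_j-u_{j'}=(1-d)^{j'}\,u_{j-j'}$ and the valuation telescoping $v_p(M)=\max\bigl(0,\,v_p(\overline{M})-\sum_{i=2}^N v_p(u_i)\bigr)$ are exactly right, and considerably more careful than the paper, which simply asserts that ``by the construction of $M$, every $u_j$ is coprime with $M$'' and passes directly to the fixed-locus claim without ever mentioning differences of weights. Your proof of the Calabi--Yau clause is likewise correct and in fact cleaner than the paper's, whose argument only establishes $d\mid k$ where it writes $k=0$; your splitting into $p=d$ versus $p\ne d$ (forcing $e=d$) is the right way to close that.

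The genuine gap is at the very point you flag: the case $p=2$ is not merely delicate but fatal to the first assertion as stated. Take $d=4$, $N=3$. Then $u_2=-2$, $u_3=7$, $\overline{M}=20$, and the recursion gives $\overline{M}_2=20/\gcd(2,20)=10$ and $\overline{M}_3=10/\gcd(7,10)=10$, so $M=10$. Here $\gcd(u_2,M)=2$, and the nonzero element $5\in\ZZ/10\ZZ$ acts on $\PP^3$ by $(x_0,x_1,x_2,x_3)\mapsto(x_0,-x_1,x_2,-x_3)$, whose fixed locus is the union of two lines, not the four coordinate points. In other words, the recursive definition of $M$ does not in general strip out enough of the even part of $\overline{M}$, and the Proposition is simply false for even $d$ with odd $N$; no by-hand treatment will rescue it. Since your odd-$p$ LTE argument is sound, the correct move is either to restrict the first assertion to odd $d$ (which already covers every application made in the paper) or to replace $M$ by its odd part.
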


\begin{proof}
By the construction of $M$, we see that every $u_j$, with $1 \leq j \leq N$ is coprime with $M$.
Therefore, we have, for all $0 \leq i < j \leq N$ and all $0 < k < M$,
$$\zeta^{k u_i} \neq \zeta^{k u_j} ~,~~ \zeta = e^{\frac{2 \ci \pi}{M}},$$
hence the statement about the fixed locus.

For the second statement, let $d=N+1$ be a prime number.
Then we have $\overline{M} \equiv 0$ modulo $d$.
Thus, if we have $k u_j \equiv 0$ modulo $\overline{M}$, then we have $k u_j \equiv 0$ modulo $d$.
But $u_j \equiv j$ modulo $d$, so that $k=0$.
As a consequence, every $u_j$ is coprime with $\overline{M}$, and $\overline{M}=M$.
\end{proof}

\begin{exa}\label{quint exa}
We realize the quintic hypersurface in $\PP^4$ as
$$P=x_0^4x_2+\dotsb+x_4^4x_0.$$
Then the group is $G = (\ZZ/205\ZZ)$, acting as
$$\zeta \cdot \underline{x} = (x_0,\zeta x_1,\zeta^{-3}x_2,\zeta^{13}x_3,\zeta^{-51}x_4).$$
\end{exa}

\subsection{Virtual localization formula}
Gromov--Witten (GW) theory of $\PP^N$ and its K-theoretic version is computed by the virtual localization formula under the natural action of the torus $T=\(\CC^*\)^N$.
Unfortunately, there are in general no non-trivial torus-actions preserving a smooth degree-$d$ hypersurface $X$,
leading to many difficulties in the computation of its GW and KGW invariants.
Nevertheless, we have an action of the finite group $G$ on $X$.

In cohomology or in Chow theory, the action of the finite group $G$ is useless with respect to the localization formula.
Indeed, we have e.g.~$A^G_*(\mathrm{pt})=\CC$.
On the other hand, we have $K^0(G,\mathrm{pt}) = R(G)$, the representation ring of the group $G$.
Moreover, there exists in K-theory a (virtual) localization formula under finite group actions.

Unfortunately, the (virtual) localization formula does not give a result in $K^0(G,\mathrm{pt})$, but in a localized ring where we invert equivariant parameters.
For instance, in the case of an abelian group $G = \ZZ/M\ZZ$, the representation ring (taken with complex coefficients) is
$$R(G)_\CC \simeq \CC[X]/(1-X^M),$$
and the multiplicative set we use for localization is
$$\left\lbrace 1-X,\dotsc,1-X^{M-1} \right\rbrace.$$
As a consequence, the localized ring is isomorphic to $\CC$ and the map $R(G)_\CC \to R(G)_{\CC,\loc}$ is not injective.
Precisely, the map sends $X$ to a primitive $M$-th root of unity $\zeta$, so that for every prime divisor $p$ of $M$, the polynomial
$$\sum_{k=0}^p X^{kM/p} \mapsto \sum_{k=0}^p \zeta^{kM/p} = 0.$$
As a conclusion, the (virtual) localization formula successfully computes a $G$-equivariant K-class expressed using roots of unity, but we cannot extract the `non-equivariant' limit corresponding to the map
$$K^0(G,\mathrm{pt}) \simeq \CC[X]/(1-X^M) \to \CC \simeq K^0(\mathrm{pt}) ~,~~ X \mapsto 1.$$

Nevertheless, we find a way to extract some information.
Indeed, K-theoretic invariants have another important feature: they are integers.
Therefore, when the order of the group is a prime number $p$, the defect of injectivity of the map $R(G) \to R(G)_\loc$ amounts to the uncertainty
$$1+X+\dotsb+X^{p-1},$$
which equals $p$ in the non-equivariant limit $X \mapsto 1$.
To conclude, we are left with the desired integer modulo $p$.
Furthermore, if we have several finite actions of different prime orders, we can increase our knowledge about the result.

Let us go back to the degree-$d$ hypersurface $X \subset \PP^N$.
The action of $G = \ZZ/M\ZZ$ on $\PP^N$ leaving $X$ invariant induces a $G$-action on the moduli spaces of stable maps to $\PP^N$ and to $X$, so that their virtual structure sheaves are $G$-equivariant, namely 
$$\cO^\vir_{\cM_g(\PP^N,\beta)} \in G_0(G,\cM_g(\PP^N,\beta)) \quad \textrm{and} \quad \cO^\vir_{\cM_g(X,\beta)} \in G_0(G,\cM_g(X,\beta)).$$
By the virtual localization formula, we then obtain
$$\cO^\vir_{\cM_g(X,\beta)} = \iota_! \left( \cfrac{\cO^\vir_{\cM_g(X,\beta)^\fix}}{\lambda_{-1} ({N^\vir_\iota}^\vee)} \right) \in G_0(G,\cM_g(X,\beta))_\loc,$$
where $\iota \colon \cM_g(X,\beta)^\fix \hookrightarrow \cM_g(X,\beta)$ denotes the $G$-fixed locus, and $N^\vir_\iota$ denotes the moving part of the perfect obstruction theory on the fixed locus.
At last, we get
\begin{equation*}
\chi \left(\cO^\vir_{\cM_g(X,\beta)}\right) = \chi \left( \cfrac{\cO^\vir_{\cM_g(X,\beta)^\fix}}{\lambda_{-1} ({N^\vir_\iota}^\vee)} \right) \in \CC.
\end{equation*}
The next step is to use Theorem \ref{quantum Lefschetz} to relate this formula to a formula for $\PP^N$, where an explicit localization formula is available via the torus action.

\subsection{Fixed locus}
We easily check all the conditions listed in Section \ref{sec enh}, but the second:
\begin{itemize}
\item every stable component of a fixed stable map is contracted.
\end{itemize}
We are able to prove it under the following restrictions on genus of the source curve and degree of the map.

\begin{pro}\label{fixed locus}
Let $G = \ZZ/M\ZZ$ act on a smooth projective variety $X$ such that, for every non-zero element $h \in G$, the $h$-fixed locus $X^h$ consists of isolated points in $X$.
Let $f \colon C \to X$ be a stable map corresponding to a $G$-fixed point in the moduli space $\cM_{g,n}(X,\beta)$.
We assume
\begin{equation}\label{cond}
g < \cfrac{p-1}{2} \quad \textrm{and} \quad \beta<M,
\end{equation}
where $p$ is the greatest prime divisor of $M$.
Then every stable component of the curve $C$ is mapped to one of the $G$-fixed point in $X$.
\end{pro}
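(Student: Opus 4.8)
The plan is to convert the statement that $f \colon C \to X$ is a $G$-fixed point of $\cM_{g,n}(X,\beta)$ into a genuine action of $G$ on the source curve. Fixing a generator $\sigma$ of $G = \ZZ/M\ZZ$, the fixed-point condition furnishes an isomorphism of pointed curves $\phi \colon C \to C$ with $f \circ \phi = \sigma \cdot f$; iterating, $\phi^M$ is an automorphism of the stable map $f$, so $\phi$ has finite order and, after absorbing $\Aut(f)$ into a central extension (which does not affect the prime-order bookkeeping below), the group generated by $\phi$ realizes a $G$-action on $C$ for which $f$ is equivariant. The elementary observation I would use throughout is that if a nontrivial subgroup $H \le G$ stabilizes a component $C_v$ and acts trivially on it, then $f(C_v)$ is fixed pointwise by $H$, hence $f(C_v) \subseteq X^h$ for any nontrivial $h \in H$; since $X^h$ is a finite set of isolated points, $C_v$ is contracted to one of them. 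Contrapositively, a component that is either non-contracted or contracted to a point not fixed by all of $G$ must carry a faithful action of its stabilizer.

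Next I would classify components as: (1) contracted to a $G$-fixed point (the desired case); (2) contracted to a non $G$-fixed point; (3) non-contracted. The goal becomes showing that every type-(2) and type-(3) component is \emph{unstable} as an abstract pointed curve, i.e.\ a rational curve with at most two special points. The degree bound $\beta < M$ governs type (3): for a non-contracted $C_v$ the stabilizer $\mathrm{Stab}_G(C_v)$ acts faithfully, and since $\sigma$ preserves $\cO(1)$, every component in the $G$-orbit of $C_v$ has the same positive image-degree; the orbit therefore contributes at least $[G \colon \mathrm{Stab}_G(C_v)]$ to $\beta$, so $\beta < M$ forces the stabilizer to be nontrivial and rules out free orbits of non-contracted components. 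When $C_v$ is fully $G$-invariant and non-contracted, a clean way to finish is to note that a free action descends all linearized line bundles to the quotient, so $M \mid \deg f|_{C_v}^*\cO(1)$ and hence $\beta \ge M$, contradicting the hypothesis.

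The genus bound $g < (p-1)/2$, with $p$ the largest prime dividing $M$, is then exploited through Riemann--Hurwitz for the order-$p$ automorphism $\psi := \phi^{M/p}$. Writing $\bar g$ for the arithmetic genus of $C/\langle \psi \rangle$ and $F$ for the number of $\psi$-fixed points on the topological realization of the nodal curve, the equivariant Euler-characteristic relation reads
$$2g - 2 = p\,(2\bar g - 2) + (p-1)\,F.$$
A $\psi$-invariant type-(2) or stable type-(3) component forces $\psi$ to act nontrivially on it (otherwise it would be contracted to a $\psi$-fixed point), producing $F \ge 1$; combined with $\bar g \ge 1$ this already gives $g \ge (p+1)/2$, contradicting the bound. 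The constraint $\bar g = 0$, $F = 2$ remaining on an individual component pins its genus to one, and for $p \ge 5$ an elliptic curve admits no order-$p$ automorphism with fixed points, while for $p \le 3$ the bound forces $g = 0$ outright; and a $\psi$-moved stable component lies in a free orbit of $p$ isomorphic copies, contributing at least $p$ to $\sum_v g_v$ whenever it has positive genus, again violating $g < (p-1)/2$.

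The step I expect to be the main obstacle is the purely rational, nodal configurations: genus-$0$ components carrying three or more special points that are permuted or stabilized by $\psi$ without generating a positive-genus summand, where the extra genus can only appear as $h^1$ of the dual graph. Making the Riemann--Hurwitz count rigorous here requires careful tracking of $\psi$-fixed points located at nodes (distinguishing whether the two branches are preserved or exchanged) and at the labelled markings (which $\psi$ must fix individually), and correctly tallying these against $h^1(\Gamma)$ for the $\ZZ/p$-symmetric dual graph $\Gamma$. Simultaneously interlocking this combinatorial bound with the degree estimate of the previous paragraph—so that the borderline free-action cases (such as an order-$p$ translation on a genus-one component) are excluded by $\beta < M$ precisely where the Riemann--Hurwitz inequality is not strict—is where both hypotheses must be used together and where the delicate part of the argument lies.
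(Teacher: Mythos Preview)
Your overall strategy of lifting the $G$-action to the source curve is the same as the paper's, but from there the two arguments diverge, and the paper's route is considerably shorter and sidesteps exactly the obstacle you flag at the end.

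Two points on the setup. First, you do not need the central-extension maneuver: since $\beta>0$ and all but finitely many points of $X$ have trivial $G$-stabilizer, pick $x\in C$ with $f(x)$ having pairwise distinct $G$-translates; then the $\phi_k:=\phi^k$ for $0\le k<M$ are distinct automorphisms of $C$, so $G$ honestly embeds in $\Aut(C)$. Second, your Riemann--Hurwitz analysis for the order-$p$ element $\psi$ is replaced in the paper by a single citation: if $C$ is itself a stable curve and $p\mid |\Aut(C)|$, then $p\le 2g+1$ by \cite[Prop.~3.6]{LiuXu}. This immediately contradicts $g<\tfrac{p-1}{2}$, so a stable $C$ is contracted.

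For the general (prestable) case, the paper avoids your ``purely rational nodal configurations'' difficulty altogether by passing to the dual graph $\Gamma_f$ with its induced $G$-action and looking only at the set $V_{>0}$ of \emph{stable} vertices of positive degree. If $G$ acted freely on $V_{>0}$, the total degree would be at least $M$, contradicting $\beta<M$; hence some stable vertex is $G$-fixed, i.e.\ the corresponding stable component $D$ has $f|_D$ itself $G$-fixed, and the previous paragraph (applied to the stable curve $D$) forces $D$ to be contracted, contradicting $D\in V_{>0}$. Thus $V_{>0}=\varnothing$. Note that this argument never touches the unstable (rational, $\le 2$ special points) components at all, nor does it need an equivariant Euler-characteristic count on $\Gamma_f$; the reduction to the Liu--Xu bound on a single stable component is the whole point.

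In short, your strategy is not wrong, but the bookkeeping you anticipate in the last paragraph is unnecessary once you (i) quote the automorphism bound for stable curves directly and (ii) run the degree/orbit argument on the \emph{stable} vertices of the dual graph, rather than on components via Riemann--Hurwitz.
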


\begin{proof}
First, we claim that if $f \colon C \to X$ is a $G$-fixed stable map of positive degree, then the group $G$ is a subgroup of the group $\Aut(C)$ of automorphisms of $C$.
Indeed, let $\zeta \in G$ be a primitive element. Since the stable map is fixed, we can choose an automorphism $\phi_1 \in \Aut(C)$ of the curve $C$ such that
$$\forall x \in C ~,~~ \zeta \cdot f(x) = f(\phi_1(x)).$$
We then define $\phi_k := \phi_\zeta^k$ for $0 \leq k <M$.
Since the degree of the map $f$ is positive and all points in $X$ except a finite number of them are not fixed by any element of $G$, we can choose a point $x \in C$ such that the points
$$\zeta^k \cdot f(x) = f(\phi_k(x))$$
are all distinct for $0 \leq k < M$.
It means that the points $\phi_k(x)$ and thus the elements $\phi_k$ are all distinct.
	
Secondly, let us assume $C$ is a stable curve and is not contracted.
Since $G$ is a subgroup of $\Aut(C)$, then the prime number $p$ divides the order of $\Aut(C)$.
By \cite[Prop. 3.6]{LiuXu}, we get $p \leq 2g+1$, which is a contradiction.

At last, we consider the case where $C$ is not a stable curve (and therefore the degree of the map is positive).
Let $\Gamma_f$ be a dual graph representing the stable map $f$, where we represent every stable component of $C$ by a vertex and every unstable component by an edge. Furthermore, we add on the graph labels to keep track of the genus, number of markings, and degree.
It is clear that every automorphism $\phi_k$ of the curve $C$ induces an automorphism of the dual graph $\Gamma_f$.
Moreover, for each stable component $D$ of $C$ whose corresponding vertex is fixed in $\Gamma_C$, the restriction $f_{|D} \colon D \to X$ is fixed by the group $G$.
	
We aim to show that the set $V_{>0}$ of vertices with positive degree is empty.
Assume it is not.
Then, if the group $G$ acts on $V_{>0}$ without fixed points, the total degree of the map is at least $M$, which is a contradiction.
Therefore, there is at least one fixed-point, i.e.~there exists a stable component $D$ of $C$ such that the restriction $f_{|D}$ is $G$-fixed.
As we have seen above, the stable component $D$ is then contracted to a point, which is a contradiction with the fact that its corresponding vertex is in $V_{>0}$.
\end{proof}

\begin{rem}\label{fixed locus improved}
Proposition \ref{fixed locus} holds as well if the condition $g<\cfrac{p-1}{2}$ is replaced by `for every stable curve of genus less than $g$, there is no automorphism of order equal to $M$'.
\end{rem}

\subsection{Equivariant and congruent formulas}
Let us apply Theorem \ref{quantum Lefschetz} to our situation.
\begin{thm}\label{big theo}
Let $g, n$, and $\beta$ be non-negative integers.
Let $X$ be a degree-$d$ loop hypersurface in $\PP^N$ and take a subgroup $H \subset \ZZ/M\ZZ$ of order $q$ acting on $X$ via the action
\begin{equation}\label{action}
\forall k \in H \subset \ZZ/M\ZZ ~,~~ k \cdot (x_0,\dotsc,x_N) = (x_0, \zeta^k x_1, \zeta^{k \cdot u_2}x_2,\dotsc,\zeta^{k \cdot u_N}x_N),
\end{equation}
see Section \ref{loop}.
This action depends on the choice of a primitive $q$-th root of unity $\zeta$.
Moreover, we have the usual $T:=(\CC^*)^N$-action on $\PP^N$ and we see that it extends the $H$-action via the embedding $\varphi \colon H \hookrightarrow T:=(\CC^*)^N$ sending $k$ to $(\zeta^k x_1, \zeta^{k \cdot u_2}x_2,\dotsc,\zeta^{k \cdot u_N})$.

Assume the following bounds
$$g < \cfrac{p-1}{2} \quad \textrm{and} \quad \beta<q,$$
where $p$ is the greatest prime divisor of $q$.
Denote $A := \bigotimes_{i=1}^n \Psi_i^{a_i} \otimes \ev^*(Y_i)$ some insertions of Psi-classes and K-classes $Y_i \in K^0(T,\PP^N)$ coming from the ambient space.
Then the corresponding $H$-equivariant K-theoretic GW invariant equals
\begin{equation*}
\chi^T \( \lambda_{-1}^T\(R\pi_*f^*\cO(d)\)^\vee \otimes A \otimes \cO_{\cM_{g,n}(\PP^N,\beta)}^{\vir,T} \) \to \chi^H \left(A \otimes \cO^{\vir,H}_{\cM_{g,n}(X,\beta)}\right) \in \CC.
\end{equation*}
Precisely, the class $\lambda_{-1}^T (R \pi_*f^*\cO(d))^\vee$ is only defined after localization, so we first apply the virtual localization formula to the left-hand side, then we compute it in $K^0(T,\overline{\cM}_{g,n})_\loc = K^0(\overline{\cM}_{g,n}) \otimes \CC[\![t_1^{\pm 1},\dotsc,t_N^{\pm 1}]\!]$ as a formal series in the $T$-equivariant parameters and their inverse, then we specialize them to $(t_1,\dotsc,t_N) = (\zeta,\zeta^{u_2},\dotsc,\zeta^{u_N})$ using $\varphi \colon H \hookrightarrow T$ and obtain a well-defined K-class in $K^0(\overline{\cM}_{g,n}) \otimes \CC$. Eventually, we take its Euler characteristic and land in $R(H)_{\CC,\loc} \simeq \CC$, where the last isomorphism depends on the primitive $M$-th root of unity $\zeta$.
\end{thm}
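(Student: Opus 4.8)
The plan is to obtain Theorem \ref{big theo} as a direct specialization of the equivariant quantum Lefschetz formula (Theorem \ref{quantum Lefschetz}) applied to the embedding $i \colon X \hookrightarrow \PP^N$, with $G = H$, ambient space $\cP = \PP^N$, normal bundle $\cN = \cO(d)$, and the enhancement $\varphi \colon H \hookrightarrow T = (\CC^*)^N$. First I would check the five assumptions of Section \ref{sec enh}. The equality of the $H$-fixed loci is immediate from Proposition \ref{fixloccond}: the weights of the $H$-action on the homogeneous coordinates are pairwise distinct, so the only $H$-fixed points of $\PP^N$ are the coordinate points, each of which lies on the loop hypersurface $X$; hence $X^H = (\PP^N)^H$, and the same computation shows $X^h$ is finite for every nonzero $h \in H$, which is the isolated-fixed-point hypothesis of Proposition \ref{fixed locus}. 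The $T$-action extends the $H$-action by construction of $\varphi$, and $\cO(d)$ is the restriction of a $T$-equivariant line bundle. Convexity up to two markings holds because any genus-$0$ stable map $f \colon \PP^1 \to \PP^N$ with at most two markings pulls $\cO(d)$ back to a line bundle of non-negative degree, so $H^1(\PP^1, f^*\cO(d)) = 0$. The one genuinely non-trivial assumption—that every stable component of an $H$-fixed stable map is contracted—is exactly Proposition \ref{fixed locus}, applied verbatim with the group $\ZZ/M\ZZ$ replaced by $H$ of order $q$; its hypotheses $g < (p-1)/2$ and $\beta < q$, with $p$ the greatest prime divisor of $q = |H|$, are precisely the bounds assumed here.

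With the assumptions verified, Theorem \ref{quantum Lefschetz} yields
$$\widetilde{j}_!\,\cO_{\cM_{g,n}(X,\beta)}^{\vir,H}=\xi_*\(\lambda_{-1}^T\(R\pi_*f^*\cO(d)\)^\vee \otimes \cO_{\cM_{g,n}(\PP^N,\beta)}^{\vir,T}\) \in G_0(H,\cM_{g,n}(\PP^N,\beta))_\loc.$$
Next I would incorporate the insertions. Since the classes $Y_i \in K^0(T,\PP^N)$ and the psi-bundles $\Psi_i$ are pulled back from the ambient moduli space (the evaluation maps to $X$ factor through those to $\PP^N$, and psi-classes are intrinsic to the source curve), the projection formula for the closed embedding $\widetilde{j}$ gives
$$\widetilde{j}_!\(\widetilde{j}^*A \otimes \cO_{\cM_{g,n}(X,\beta)}^{\vir,H}\)=\xi_*\(\lambda_{-1}^T\(R\pi_*f^*\cO(d)\)^\vee \otimes A \otimes \cO_{\cM_{g,n}(\PP^N,\beta)}^{\vir,T}\),$$
where $\widetilde{j}^*A$ is the same insertion read on the moduli space of $X$. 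Because the structure map to a point factors through the proper embedding $\widetilde{j}$, applying $\chi^H$ converts the left-hand side into $\chi^H(A \otimes \cO^{\vir,H}_{\cM_{g,n}(X,\beta)})$, the right-hand side of the theorem.

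It remains to make sense of the arrow in the statement, i.e.~of the specialization on the ambient side. Because $\lambda_{-1}^T(R\pi_*f^*\cO(d))^\vee$ is only defined after localization (Remark \ref{defafterloc}), I would compute $\chi^T(\lambda_{-1}^T(R\pi_*f^*\cO(d))^\vee \otimes A \otimes \cO^{\vir,T}_{\cM_{g,n}(\PP^N,\beta)})$ by first applying the $T$-virtual localization formula on $\cM_{g,n}(\PP^N,\beta)$. On the $T$-fixed locus the bundle $R\pi_*f^*\cO(d)$ splits into $T$-characters, so its $\lambda_{-1}^T$-class becomes an explicit product of factors $1 - t^{-w}$ that is invertible after localization, and the whole expression lands in $K^0(\overline{\cM}_{g,n})\otimes \CC[\![t_1^{\pm 1},\dotsc,t_N^{\pm 1}]\!]$. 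Specializing the equivariant parameters along $\varphi$, namely setting $(t_1,\dotsc,t_N)=(\zeta,\zeta^{u_2},\dotsc,\zeta^{u_N})$, is exactly the map $\xi_*$; that this specialization is well-defined and commutes with the Euler characteristic and with the tensor product is guaranteed by Proposition \ref{proploc} together with the compatibility properties of $\xi_*$ established in Section \ref{sec enh}, after which the result lands in $R(H)_{\CC,\loc}\simeq\CC$. The main obstacle is precisely this last bookkeeping: the specialization $R(T)_\CC \to R(H)_{\CC,\loc}$ is not injective, so one must perform it only after the $T$-localization on $\PP^N$, where every denominator produced by the localization formula remains nonzero under $\varphi$ (the $T$-fixed components being distinct coordinate points with distinct characters), so that no division by zero is incurred when passing from $T$- to $H$-parameters.
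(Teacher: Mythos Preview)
Your proposal is correct and matches the paper's approach: Theorem \ref{big theo} is obtained exactly as you describe, by specializing Theorem \ref{quantum Lefschetz} to $i\colon X\hookrightarrow\PP^N$ with $\cN=\cO(d)$ and verifying its hypotheses via Proposition \ref{fixloccond} (equality of fixed loci, isolated $h$-fixed points) and Proposition \ref{fixed locus} (contracted stable components under the bounds on $g$ and $\beta$). The paper in fact gives no separate proof beyond the sentence ``Let us apply Theorem \ref{quantum Lefschetz} to our situation''; your write-up simply makes explicit the hypothesis-checking, the projection-formula step for the insertions $A$, and the well-definedness of the specialization $\xi_*$ already established in Section \ref{sec enh}.
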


\begin{rem}
The localization map $R(H) \to R(H)_{\CC,\loc}$ corresponds to the map $\ZZ[X]/(1-X^q) \to \CC$ sending the variable $X$ to $\zeta$.
\end{rem}

\begin{rem}
In Theorem \ref{big theo}, it is important that for every non-zero element $h \in H$, the $h$-fixed locus consists of coordinate points in $\PP^N$. It is guaranteed by Proposition \ref{fixloccond} and the fact that $H \subset \ZZ/M\ZZ$.
\end{rem}

\begin{cor}\label{non equiv limit}
We take the same notations and assumptions as in Theorem \ref{big theo}.
We further assume that the order $q$ of the group $H$ is a prime number.
For each $1 \leq k <q$, denote by $B_k \in \CC$ the result of Theorem \ref{big theo} when $\zeta = e^{\frac{2 \ci k \pi}{q}}$.
Then the K-theoretic GW invariant of $X$ equals
$$\chi \left(A \otimes \cO^{\vir}_{\cM_{g,n}(X,\beta)}\right) \equiv -\(B_1+\dotsb+B_{q-1}\) \in \ZZ/q\ZZ.$$
\end{cor}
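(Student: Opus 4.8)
The starting point is the exact integrality of K-theoretic Gromov--Witten invariants: the quantity $\chi\(A \otimes \cO^{\vir}_{\cM_{g,n}(X,\beta)}\)$ is a genuine integer, being the Euler characteristic of a coherent sheaf tensored with a K-class. The strategy is to relate this non-equivariant integer to the $H$-equivariant invariant, which lives in the representation ring $R(H)$, and then to exploit the arithmetic of $R(H)_\CC$ when $q$ is prime. First I would recall that the forgetful map $K^0(H,\cM)\to K^0(\cM)$ sending the $H$-action to the trivial group corresponds, after taking Euler characteristics, to the evaluation $X\mapsto 1$ on $R(H)\simeq \ZZ[X]/(1-X^q)$. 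Thus the desired non-equivariant invariant is the image under $X\mapsto 1$ of a well-defined element $B:=\sum_{i=0}^{q-1} a_i X^i \in R(H)$, whose coefficients $a_i$ are integers, and the invariant equals $a_0+\dotsb+a_{q-1}$.

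The key observation is that Theorem \ref{big theo} does not compute $B$ itself but only its image $B(\zeta)=\sum_i a_i \zeta^i$ under the localization map $X\mapsto\zeta$, for each primitive $q$-th root of unity $\zeta=e^{2\ci k\pi/q}$, $1\le k<q$; this image is precisely $B_k$. Next I would sum these values over all nontrivial roots:
$$\sum_{k=1}^{q-1} B_k = \sum_{i=0}^{q-1} a_i \sum_{k=1}^{q-1} \zeta_k^i, \quad \zeta_k:=e^{\frac{2\ci k\pi}{q}}.$$
Since $q$ is prime, every root $\zeta_k$ with $1\le k<q$ is primitive, and the inner character sum is the standard evaluation $\sum_{k=1}^{q-1}\zeta_k^i = q-1$ when $q\mid i$ (that is, $i=0$) and $=-1$ otherwise. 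Therefore
$$\sum_{k=1}^{q-1} B_k = (q-1)a_0 - (a_1+\dotsb+a_{q-1}) = q\,a_0 - (a_0+a_1+\dotsb+a_{q-1}).$$

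The conclusion is now immediate modulo $q$: since $q\,a_0 \equiv 0 \pmod q$ and the integer $a_0+\dotsb+a_{q-1}$ is exactly the non-equivariant invariant $\chi\(A \otimes \cO^{\vir}_{\cM_{g,n}(X,\beta)}\)$, we obtain
$$\chi\(A \otimes \cO^{\vir}_{\cM_{g,n}(X,\beta)}\) = a_0+\dotsb+a_{q-1} \equiv -\(B_1+\dotsb+B_{q-1}\) \pmod q,$$
as claimed. The main obstacle is not the arithmetic, which is elementary, but ensuring that the $B_k$ really are the images of a single fixed integral class $B\in R(H)$ under the various root-of-unity specializations; this requires that the equivariant invariant be genuinely computed in $R(H)$ before localization, so that the integer coefficients $a_i$ are independent of $k$. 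This is guaranteed because the underlying $H$-equivariant K-class $A\otimes\cO^{\vir,H}_{\cM_{g,n}(X,\beta)}$ is defined integrally in $G_0(H,\cM)$ and only its image is read off in the localized ring $R(H)_{\CC,\loc}\simeq\CC$ for each choice of $\zeta$; the hypotheses of Theorem \ref{big theo}, in particular the genus and degree bounds ensuring that the quantum Lefschetz comparison applies for every primitive root simultaneously, secure this uniformity.
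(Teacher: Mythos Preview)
Your proposal is correct and follows essentially the same argument as the paper: write the $H$-equivariant Euler characteristic as $\sum_{l=0}^{q-1} a_l X^l \in \ZZ[X]/(1-X^q)$, observe that Theorem~\ref{big theo} computes $\sum_l a_l \zeta^l$ for each primitive $q$-th root $\zeta$, sum over all $q-1$ primitive roots to get $q a_0 - \sum_l a_l$, and read off the congruence modulo $q$. Your added remark on why the integers $a_l$ are independent of the choice of $\zeta$ is a welcome clarification but does not change the line of proof.
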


\begin{proof}
The $H$-equivariant Euler characteristic $\chi^H \left(A \otimes \cO^{\vir,H}_{\cM_{g,n}(X,\beta)}\right)$ lies in the representation ring $R(H) \simeq \ZZ[X]/(1-X^q)$, so that 
there exist integers $a_0,\dotsc,a_{q-1}$ such that
$$\chi^H \left(A \otimes \cO^{\vir,H}_{\cM_{g,n}(X,\beta)}\right) = \sum_{l=0}^{q-1} a_l X^l \in \ZZ[X]/(1-X^q).$$
Our goal would be to compute
$$\chi \left(A \otimes \cO^{\vir}_{\cM_{g,n}(X,\beta)}\right) = \sum_{l=0}^{q-1} a_l \in \ZZ,$$
but Theorem \ref{big theo} only gives us
$\sum_{l=0}^{q-1} a_l \zeta^l \in \CC$.
However, since $q$ is a prime number, we can apply Theorem \ref{big theo} to every primitive $q$-th root of unity $\zeta^k$, $1 \leq k <q$.
Summing the various results, we obtain
$$\sum_{k=1}^{q-1} \sum_{l=0}^{q-1} a_l \zeta^{kl} = q a_0 - \sum_{l=0}^{q-1} a_l \in \ZZ,$$
leading to the congruence.
\end{proof}

\begin{rem}\label{impr}
Assume the order $q$ of the group $H$ is not a prime number and choose a non-zero element $h \in H$.
Even when $h$ is not a primitive element, we can apply Theorem \ref{big theo} to the subgroup $\langle h \rangle$, but we then have the following bounds
$$g < \cfrac{p-1}{2} \quad \textrm{and} \quad \beta < \mathrm{ord}(h),$$
where $\mathrm{ord}(h)$ denotes the order of the element $h$ and $p$ is its greatest prime divisor.
In order to obtain the KGW invariant in $H$, we then need to sum all the results of Theorem \ref{big theo} for all non-zero elements $h \in H$.
Therefore, we have to restrict to the bounds
$$g < \cfrac{p-1}{2} \quad \textrm{and} \quad \beta <p,$$
where $p$ is the smallest prime divisor of $q$.
\end{rem}

\begin{exa}
For the quintic threefold of Example \ref{quint exa}, the specialization of equivariant parameters corresponding to $G \hookrightarrow T$ is
$$(t_0,\dotsc,t_4) = (1,\zeta,\zeta^{-3},\zeta^{13},\zeta^{-51}) ~,~~ \zeta^{205}=1.$$
Moreover, we have a subgroup $H := \ZZ/41\ZZ \subset \ZZ/205\ZZ$, so that by Corollary \ref{non equiv limit}, we are able to compute all KGW invariants modulo $41$ up to genus $19$ and degree $40$.
Moreover, by Remark \ref{impr}, we are able to compute all KGW invariants modulo $205$ in genera $0$ and $1$ up to degree $4$.
\end{exa}

\begin{rem}\label{modfive}
Another way to realize the quintic hypersurface in $\PP^4$ is
$$X = \left\lbrace x_0^5+\dotsb+x_4^5 = 0 \right\rbrace \subset \PP^4.$$
Then the group is $\(\ZZ/5\ZZ\)^4$, but to ensure that the $g$-fixed locus consists of isolated points for every element $g$ of the group, we need to consider the subgroup $G=\ZZ/5\ZZ$, acting as
$$\zeta \cdot \underline{x} = (x_0,\zeta x_1,\zeta^2x_2,\zeta^3x_3,\zeta^4x_4).$$
Furthermore, we observe the $G$-fixed locus is empty.
We then deduce that all KGW invariants in genera $0$ and $1$ and up to degree $4$ vanish modulo $5$.
\end{rem}

\subsection{Example of the quintic threefold}
We illustrate Theorem \ref{big theo} and Corollary \ref{non equiv limit} by a computation of the genus-one degree-one unmarked KGW invariant in the case of the quintic hypersurface in $\PP^4$, modulo $205$.

\begin{pro}\label{exa}
Let $X \subset \PP^4$ be a smooth quintic hypersurface.
We find
$$\chi \left(\cfrac{\cO^{\vir}_{\cM_{1,0}(X,1)}}{1-q \EE^\vee}\right) \equiv (120 q^2+180 q +125) \cfrac{(1-q^4-q^6)}{(1-q^4)(1-q^6)} \in \ZZ/205\ZZ[\![q]\!].$$
\end{pro}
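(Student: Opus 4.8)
The plan is to apply Corollary \ref{non equiv limit} with the subgroup $H = \ZZ/41\ZZ \subset \ZZ/205\ZZ$ together with Remark \ref{impr} to reach the finer modulus $205$, and to reduce everything to a torus-localized computation on $\overline{\cM}_{1,0}(\PP^4,1)$. Since $g=1$ and $\beta=1$, the bounds $g < (p-1)/2$ and $\beta < q$ are comfortably satisfied for $q=41$, and by Remark \ref{impr} also for the prime divisor $p=5$ of $205$, so Theorem \ref{big theo} applies. The insertion here is $A = (1-q\EE^\vee)^{-1} = \sum_{k \geq 0} q^k \cSy^k \EE^\vee$, expanded as a formal power series in $q$; note $\EE = R^1\pi_* \cO_{\cC}$ is the Hodge bundle, which on $\overline{\cM}_{1,0}$ is the trivial line bundle so that its Chern and K-theoretic data are governed by the single $\lambda_1$-class.

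First I would set up the torus-localized expression. By Theorem \ref{big theo}, the $H$-equivariant invariant is the specialization at $(t_0,\dots,t_4)=(1,\zeta,\zeta^{-3},\zeta^{13},\zeta^{-51})$ of the $T$-equivariant Euler characteristic
$$\chi^T\(\lambda_{-1}^T(R\pi_*f^*\cO(5))^\vee \otimes \tfrac{1}{1-q\EE^\vee} \otimes \cO^{\vir,T}_{\overline{\cM}_{1,0}(\PP^4,1)}\).$$
Here one applies the ordinary virtual localization formula for the $T$-action on $\overline{\cM}_{1,0}(\PP^4,1)$ to the right-hand factor, which transports the computation to the $T$-fixed loci. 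The fixed loci for genus-one degree-one stable maps to $\PP^4$ are well understood: a degree-one component must be a torus-invariant line $\overline{x_ix_j}$ mapped isomorphically, carrying a contracted genus-one tail attached at a fixed point. I would enumerate these fixed loci, read off the $T$-weights of $R\pi_*f^*\cO(5)$, of the virtual normal bundle, and of $\EE^\vee$ on each, and assemble the $\lambda_{-1}^T(\cdot)^\vee$ factors as products over the relevant weights; the genus-one tail contributes integrals over $\overline{\cM}_{1,1}$ of Hodge-class data, which is where the $(1-q^4-q^6)/((1-q^4)(1-q^6))$ shape should emerge from the K-theoretic analogue of the $\lambda_1$-integral.

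The main obstacle I expect is the explicit evaluation of the contracted genus-one contributions in K-theory: one must compute Euler characteristics over $\overline{\cM}_{1,1}$ of the relevant tautological bundles (powers of $\EE$ twisted by the smoothing-node line bundle), and then organize the resulting rational functions in the equivariant parameters into a form that specializes cleanly at the $205$-th roots of unity. After this specialization and summation over primitive roots as in Corollary \ref{non equiv limit} (and over the elements of $H$ as in Remark \ref{impr}), the $\CC$-valued answers must be recognized as the reduction mod $205$ of an explicit integer, which is then repackaged as the stated rational function in $q$. The polynomial prefactor $120q^2+180q+125$ should arise from the Koszul factor $\lambda_{-1}^T(R\pi_*f^*\cO(5))^\vee$ specialized at the fixed points — the degree-five structure is visible in the coefficients — while the $q$-dependent denominator comes from the $(1-q\EE^\vee)^{-1}$ insertion interacting with the genus-one geometry. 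Verifying that all fixed-point contributions combine, after specialization, into precisely these congruence classes is the computational heart of the proof.
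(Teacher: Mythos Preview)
Your plan is essentially the paper's: torus-localize on $\overline{\cM}_{1,0}(\PP^4,1)$, enumerate the $T$-fixed graphs (one degree-one edge with a contracted genus-one vertex, so $\overline{\cM}_\Gamma \simeq \overline{\cM}_{1,1}$), specialize to roots of unity, and sum. Two points deserve correction, and one route differs from the paper.

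First, a factual slip: the Hodge bundle $\EE$ on $\overline{\cM}_{1,1}$ is \emph{not} trivial; it is the line bundle $\Psi_1$. This identification is exactly what the paper uses to convert the genus-one vertex contribution into a single $\Psi$-class integral on $\overline{\cM}_{1,1}$. The ``main obstacle'' you flag --- the K-theoretic evaluation on $\overline{\cM}_{1,1}$ --- is resolved by the closed formula of Lee--Qu for $\chi\bigl(\overline{\cM}_{1,1};\,(1-q\EE^\vee)^{-1}(1-q_1\Psi_1)^{-1}\bigr)$; this is where the factor $(1-q^4-q^6)/((1-q^4)(1-q^6))$ genuinely comes from, not from a K-theoretic $\lambda_1$ heuristic. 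The polynomial prefactor $120q^2+180q+125$ then arises from summing the residual edge/flag contributions over the primitive roots, not directly from the Koszul factor alone.

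Second, for the passage from mod $41$ to mod $205$: invoking Remark \ref{impr} (summing over all nonzero $h\in\ZZ/205\ZZ$ on the loop quintic) is valid here since $g=1<2$ and $\beta=1<5$, but the paper takes a shorter path. It computes only the $\ZZ/41\ZZ$ sum on the loop quintic and obtains the mod-$5$ information for free from Remark \ref{modfive}: on the Fermat quintic the $\ZZ/5\ZZ$-fixed locus in $X$ is \emph{empty}, so the invariant is $\equiv 0 \pmod 5$, and the Chinese remainder theorem finishes. Your approach would require additionally evaluating the localization sum at the four primitive fifth roots on the loop model, which is unnecessary work.
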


In order to prove Proposition \ref{exa}, we first write the general graph sum formula coming from torus-localization and we then specialize to $(g,n,\beta) = (1,0,1)$.

Following the general scheme of Theorem \ref{big theo}, we compute the K-theoretic class
$$\chi^T \( \lambda_{-1}^T\(R\pi_*f^*\cO(5)\)^\vee \otimes A \otimes \cO_{\cM_{g,n}(\PP^4,\beta)}^{\vir,T} \) \in K^0(\overline{\cM}_{g,n}) \otimes \CC[\![t_0^{\pm 1},\dotsc,t_4^{\pm 1}]\!].$$
It is done via the standard virtual localization formula of \cite{GraberPandha}, lifted to K-theory, as a sum over dual graphs.
Indeed, the class $\lambda_{-1}$ is multiplicative in K-theory, just as the Euler class in cohomology, so that the whole proof of \cite[Section 4]{GraberPandha} holds.
Therefore, we take the same notations as in \cite{GraberPandha}, to which we refer for instance for the description of graphs, except that we take the convention $t_j = -\lambda_j$ with respect to their $T$-weights.

Let $\Gamma$ be a graph in the localization formula of $\PP^4$.
We denote by $\overline{\cM}_\Gamma$ the associated moduli space of stable curves and by $\mathrm{A}_\Gamma$ the group of automorphisms coming from the graph $\Gamma$ and from degrees of the edges, so that the corresponding fixed locus in $\cM_{g,n}(\PP^4,\beta)$ is the quotient stack $[\overline{\cM}_\Gamma/\mathrm{A}_\Gamma]$, see \cite{GraberPandha}.
The contribution of the graph $\Gamma$ to the localization formula is of the form
$$\chi \([\overline{\cM}_\Gamma/\mathrm{A}_\Gamma] ;\Contr(\mathrm{flags}) \cdot \Contr(\mathrm{vertices}) \cdot \Contr(\mathrm{edges}) \),$$
where we have
\begin{eqnarray*}
\Contr(\mathrm{flags}) & = & \cfrac{1}{\(1-\(\cfrac{t_{i(F)}}{t_{j(F)}}\)^{1/d_e} \Psi_F\)} \cdot \cfrac{\prod_{m \neq i(F)} 1-\cfrac{t_{i(F)}}{t_m}}{1-\cfrac{t_{i(F)}^5}{t_0^4t_1}}, \\
\Contr(\mathrm{vertices}) & = &  \cfrac{1-\cfrac{t_{i(v)}^5}{t_0^4t_1}}{\displaystyle{\sum_{k=0}^{g(v)}} (-1)^k \(\cfrac{t_{i(v)}^5}{t_0^4t_1}\)^k \Lambda^k \EE} \cdot \prod_{m \neq i(v)} \cfrac{\displaystyle{\sum_{k=0}^{g(v)}} (-1)^k \(\cfrac{t_{i(v)}}{t_m}\)^k \Lambda^k \EE}{1-\cfrac{t_{i(v)}}{t_m}}, \\
\Contr(\mathrm{edges}) & = & \prod_{a=1}^{d_e} \(2- \(\cfrac{t_{j'}}{t_j}\)^{a/d_e}-\(\cfrac{t_j}{t_{j'}}\)^{a/d_e}\)^{-1} \cdot \prod_{\substack{a+b=d_e \\ k \neq j,j'}} \(1-\cfrac{\(t_j^a t_{j'}^b\)^{1/d_e}}{t_k}\)^{-1} \\
& &  \cdot \prod_{a+b=5 d_e} \(1-\cfrac{\(t_j^a t_{j'}^b\)^{1/d_e}}{t_0^4t_1}\),
\end{eqnarray*}
where we write here the contribution of an edge linking the coordinate points $p_j$ and $p_{j'}$.

\begin{rem}
In the contribution of vertices, we can rewrite the sum in terms of the lambda-structure as
$\lambda_{-u}(\EE)$, with $u:=\cfrac{t_{i(v)}^5}{t_0^4t_1}$.
\end{rem}

\begin{rem}
All contributions are in $K^0(\overline{\cM}_\Gamma) \otimes \CC[\![t_0^{\pm 1/d},\dotsc,t_4^{\pm 1/d}]\!]_{d \in \NN}$.
However, taking the K-theoretic pushforward map from $[\overline{\cM}_\Gamma/\mathrm{A}_\Gamma]$ to a point corresponds to extracting the $\mathrm{A}_\Gamma$-invariant part in $K^0_T(\overline{\cM}_\Gamma)$, which means that all summands with non-integral powers of $t_0,\dotsc,t_4$ become zero.
\end{rem}

Let us now specialize the formula to $(g,n,\beta) = (1,0,1)$.
The graph $\Gamma$ has only two vertices $v_1$ and $v_2$, of respective genera $1$ and $0$, and one degree-one edge in between.
Moreover, as the vertex $v_2$ has valence one, it corresponds to a free point (not marked, not a node) rather than to a stable component of the curve.
We denote by $0 \leq i_1 \neq i_2 \leq 4$ the indices of the coordinate points $p_{i_1}$ and $p_{i_2}$ to which the vertices $v_1$ and $v_2$ are sent by the stable map.
Note also that such a graph has no automorphisms and the moduli space $\overline{\cM}_\Gamma$ is isomorphic to $\overline{\cM}_{1,1}$.
Furthermore, we recall that the Hodge bundle $\EE$ over $\overline{\cM}_{1,1}$ is identified with the cotangent line $\Psi_1$ at the marking.
As a consequence, the virtual localization formula equals
\begin{eqnarray*}
\sum_{0 \leq i_1 \neq i_2 \leq 4} && \cfrac{\prod_{a+b=5} \(1-\cfrac{t_{i_1}^a t_{i_2}^b}{t_0^4t_1}\)}{
\(2- \cfrac{t_{i_1}}{t_{i_2}}-\cfrac{t_{i_2}}{t_{i_1}}\) \cdot \prod_{k \neq i_1,i_2} \(1-\cfrac{t_{i_1}}{t_k}-\cfrac{t_{i_2}}{t_k}+\cfrac{t_{i_1}t_{i_2}}{t_k^2}\)} \\
&& \chi \(\overline{\cM}_{1,1} ; \cfrac{1}{1-q \EE^\vee} \cfrac{\displaystyle{\prod_{m \neq i_1,i_2}} \(1- \cfrac{t_{i_1}}{t_m} \Psi_1\)}{1-\cfrac{t_{i_1}^5}{t_0^4t_1} \Psi_1}\).
\end{eqnarray*}

Once we specialize to $(t_0,\dotsc,t_4) = (1,\zeta,\zeta^{-3},\zeta^{13},\zeta^{-51})$, where $\zeta$ is any primitive root of unity of order $41$, we notice that denominators never vanish, but the numerator could vanish, precisely
\begin{eqnarray*}
1-\cfrac{t_{i_1}^a t_{i_2}^b}{t_0^4t_1} = 0  \iff && i_2=i_1+1 \textrm{ and } (a,b)=(4,1) \\
& \textrm{or }&
i_1=i_2+1 \textrm{ and } (a,b)=(1,4),
\end{eqnarray*}
with the cyclic convention on indices, i.e.~$t_5:=t_0$.
Moreover, we have
$$1- \cfrac{t_{i_1}}{t_{i_1+1}} \Psi_1 = 1-\cfrac{t_{i_1}^5}{t_0^4t_1} \Psi_1,$$
so that the specialization of the localization formula gives
\begin{eqnarray*}
\sum_{\substack{0 \leq i_1 \neq i_2 \leq 4 \\ i_2 \neq i_1+1 \\ i_1 \neq i_2+1}} && \left[\cfrac{\prod_{a+b=5} \(1-\cfrac{t_{i_1}^a t_{i_2}^b}{t_0^4t_1}\)}{
\(2- \cfrac{t_{i_1}}{t_{i_2}}-\cfrac{t_{i_2}}{t_{i_1}}\) \cdot \prod_{k \neq i_1,i_2} \(1-\cfrac{t_{i_1}}{t_k}-\cfrac{t_{i_2}}{t_k}+\cfrac{t_{i_1}t_{i_2}}{t_k^2}\)} \right. \\
&& \left. \chi \(\overline{\cM}_{1,1} ; \cfrac{1}{1-q \EE^\vee} \displaystyle{\prod_{m \neq i_1,i_1+1,i_2}} \(1- \cfrac{t_{i_1}}{t_m} \Psi_1\)\) \right]_{(t_0,\dotsc,t_4) = (1,\zeta,\zeta^{-3},\zeta^{13},\zeta^{-51})}.
\end{eqnarray*}

By \cite[Prop. 2.9]{LeeQu}, we have
\begin{eqnarray*}
&& \chi \(\overline{\cM}_{1,1} ; \cfrac{1}{1-q \EE^\vee} \cfrac{1}{1-q_1 \Psi_1} \) = \\
&& \cfrac{(1-qq_1)(1-q^4-q^6-q_1^2q^6-q_1^2q^8-q_1^4q^8+q^2q_1^2+q^4q_1^4+q^6q_1^6+q^8q_1^8)}{(1-q^4)(1-q^6)(1-q_1^4)(1-q_1^6)},
\end{eqnarray*}
hence we get
\begin{eqnarray*}
\chi \(\overline{\cM}_{1,1} ; \cfrac{\displaystyle{\prod_{m \neq i_1,i_1+1,i_2}} \(1- \cfrac{t_{i_1}}{t_m} \Psi_1\)}{1-q \EE^\vee} \) & = & \cfrac{(1-q^4-q^6)}{(1-q^4)(1-q^6)} \displaystyle{\prod_{m \neq i_1,i_1+1,i_2}} \(1+ \cfrac{t_{i_1}}{t_m} q\).
\end{eqnarray*}

As a consequence, our formula simplifies as
\begin{eqnarray*}
\cfrac{(1-q^4-q^6)}{(1-q^4)(1-q^6)} \sum_{\substack{0 \leq i_1 \neq i_2 \leq 4 \\ i_2 \neq i_1+1 \\ i_1 \neq i_2+1}} \left[\cfrac{\prod_{a+b=5} \(1-\cfrac{t_{i_1}^a t_{i_2}^b}{t_0^4t_1}\) \displaystyle{\prod_{m \neq i_1,i_1+1,i_2}} \(1+ \cfrac{t_{i_1}}{t_m} q\)}{
\(2- \cfrac{t_{i_1}}{t_{i_2}}-\cfrac{t_{i_2}}{t_{i_1}}\) \cdot \prod_{k \neq i_1,i_2} \(1-\cfrac{t_{i_1}}{t_k}-\cfrac{t_{i_2}}{t_k}+\cfrac{t_{i_1}t_{i_2}}{t_k^2}\)} \right]_{(t_0,\dotsc,t_4) = (1,\zeta,\zeta^{-3},\zeta^{13},\zeta^{-51})}.
\end{eqnarray*}
At last, we must take the opposite of the sum of these expressions over all primitive roots $\zeta$ of order $41$.
First, we notice that the term inside the sum is a polynomial in $q$ of degree at most two, so that it is enough to evaluate it at $q \in \left\lbrace 0,1,2 \right\rbrace$.
Using Sagemath, we eventually find
\begin{eqnarray*}
\chi \left(\cfrac{\cO^{\vir}_{\cM_{1,0}(X,1)}}{1-q \EE^\vee}\right) & \equiv & (-85 q^2+590 q -80) \cfrac{(1-q^4-q^6)}{(1-q^4)(1-q^6)} \\
& \equiv & (38 q^2+16 q +2) \cfrac{(1-q^4-q^6)}{(1-q^4)(1-q^6)} \in \ZZ/41\ZZ[\![q]\!].
\end{eqnarray*}
Furthermore, using Remark \ref{modfive}, we obtain the result of Proposition \ref{exa}.

\subsection{Special case of elliptic curves}
In this section, we use the ideas behind Corollary \ref{non equiv limit} to prove that KGW theory with homogeneous insertions of an elliptic curve is trivial.

\begin{pro}\label{elliptic}
Let $E$ be an elliptic curve.
Then for every genus $g$ and degree $\beta$, number of markings $n$, and insertions $A := \bigotimes_{i=1}^n \Psi_i^{a_i} \otimes Y_i$, with $2g-2+n>0$ and $Y_i \in K^0(E)$ homogeneous K-classes,
the corresponding KGW invariant vanishes
$$\chi \left(A \otimes \cO^{\vir}_{\cM_{g,n}(E,\beta)}\right)=0.$$
\end{pro}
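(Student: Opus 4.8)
The plan is to iterate the mechanism of Corollary~\ref{non equiv limit} over \emph{all} primes simultaneously. Set $N := \chi(A \otimes \cO^{\vir}_{\cM_{g,n}(E,\beta)}) \in \ZZ$, a well-defined integer by the integrality of KGW invariants. For each prime $p$ I would pick a nonzero $p$-torsion point $a \in E[p]$ and let $G = \ZZ/p\ZZ$ act on $E$ by the translation $t_a$. Translation by a nonzero element of $E$ has no fixed points, so the action is free and $E^h = \emptyset$ for every nonzero $h \in G$. This action lifts to $\cM_{g,n}(E,\beta)$ compatibly with the virtual structure sheaf; the cotangent bundles $\Psi_i$ are canonically equivariant, and the homogeneity of the classes $Y_i$ is precisely what provides $G$-equivariant lifts in $K^0(G,E)$. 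Hence $A \otimes \cO^{\vir}$ is $G$-linearized, its equivariant Euler characteristic lies in $R(G) = \ZZ[X]/(1-X^p)$, and it specializes to $N$ under $X \mapsto 1$.

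The heart of the argument is that the fixed locus $\cM_{g,n}(E,\beta)^G$ is empty. For $n \geq 1$ this is immediate and uniform in $p$: the evaluation map $\ev_1 \colon \cM_{g,n}(E,\beta) \to E$ is $G$-equivariant, so a fixed stable map would send its first marking into $E^G = \emptyset$. For $n = 0$ I would argue on the source curve. Constant maps are harmless, since $\cM_{g,0}(E,0) = \overline{\cM}_{g,0} \times E$ has fixed locus $\overline{\cM}_{g,0} \times E^G = \emptyset$. For a fixed map of positive degree, the first step of Proposition~\ref{fixed locus} embeds $G \hookrightarrow \Aut(C)$; since $E \to E/G$ is \'etale and $f$ is equivariant, any ramification of $C \to C/G$ would lie over $E^G = \emptyset$, so $C \to C/G$ is \'etale as well. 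By Riemann--Hurwitz every positive-degree $G$-fixed component then has normalized genus congruent to $1$ modulo $p$, while the remaining components occur in free $G$-orbits; tracking the arithmetic genus through the dual graph shows that a connected $G$-invariant configuration of total genus $g$ cannot exist once $p$ exceeds $g$. Thus for all but finitely many primes the fixed locus is empty.

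Granting emptiness, the virtual localization formula~\eqref{vir loca form} forces the image of the equivariant invariant in $R(G)_{\CC,\loc} \simeq \CC$ to vanish for every primitive $p$-th root of unity. Summing these values over primitive roots, exactly as in the proof of Corollary~\ref{non equiv limit}, yields $N \equiv 0 \pmod p$. Since this congruence holds for infinitely many primes $p$ while $N$ is a single fixed integer, I conclude $N = 0$, which is the desired vanishing.

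I expect the real obstacle to be the $n = 0$ half of the emptiness statement, where the clean equivariance of $\ev_1$ is unavailable and one must analyze $G$-fixed stable maps with nodal source directly. The delicate configuration is a genus-one component mapping to $E$ by an isogeny, carrying chains of contracted components permuted freely by $G$; excluding these for large $p$ is where the combinatorics of the $G$-action on the dual graph, together with the divisibility constraints coming from the \'etale quotient, must be handled carefully. A secondary point to verify is that homogeneity genuinely yields a $G$-equivariant lift of each $Y_i$ for every prime $p$ at once, since it is this uniformity that supplies the infinitely many congruences needed to force $N = 0$.
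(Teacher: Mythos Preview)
Your overall strategy is exactly the paper's: act by translation through a $p$-torsion point, show the $G$-fixed locus of $\cM_{g,n}(E,\beta)$ is empty, deduce $N\equiv 0\pmod p$ from virtual localization, and conclude $N=0$ since this holds for infinitely many primes. The only difference lies in how emptiness is established.

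The paper does not separate cases on $n$. It fixes $p$ larger than both $\beta$ and the maximal order of an automorphism of a genus-$g$ stable curve, then applies Proposition~\ref{fixed locus} through Remark~\ref{fixed locus improved}: every stable component of a $G$-fixed stable map would have to land in $E^G=\emptyset$, and in the stable range every source curve carries at least one stable component, so no such map exists. This handles all $n$ at once and sidesteps the dual-graph combinatorics you flag as the ``real obstacle''. Your evaluation-map shortcut for $n\ge 1$ is a pleasant alternative the paper does not mention; for $n=0$ your Riemann--Hurwitz route in fact closes more quickly than you fear, since once $G$ acts freely on $C$ the \'etale quotient gives $g-1\equiv 0\pmod p$ directly for the arithmetic genus via $\chi(\cO_C)=p\,\chi(\cO_{C/G})$, contradicting $g\ge 2$ as soon as $p>g-1$---no component-wise bookkeeping is needed. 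Your secondary concern about equivariant lifts of the $Y_i$ is legitimate, but the paper's proof leaves that point equally implicit.
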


\begin{proof}
Let $M$ be the largest possible order of an automorphism of a stable curve of genus $g$.
Let $p$ be any prime number larger than $M+1$ and $\beta+1$.
Define $G:=\ZZ/p\ZZ$ and take a $G$-torsion point $x \in E$.
Then the group $G$ acts on the elliptic curve $E$ by translation $y \mapsto y+x$, and for every non-zero element $h \in G$, the $h$-fixed locus is empty.
By Remark \ref{fixed locus improved} and Proposition \ref{fixed locus}, the $G$-fixed locus in the moduli space of stable maps $\cM_{g,n}(E,\beta)$ is empty.
Therefore, by the localization formula, the $G$-equivariant KGW invariant vanishes, so that we get
$$\chi \left(A \otimes \cO^{\vir}_{\cM_{g,n}(E,\beta)}\right) \equiv 0 \in \ZZ/p\ZZ$$
for the non-equivariant limit.
Since it is true for infinitely many prime numbers $p$, then we obtain the vanishing in $\ZZ$.
\end{proof}

\begin{rem}
Interestingly, KGW invariants are deduced from GW invariants via a Kawazaki--Riemann--Roch theorem, see \cite{Tonita} and \cite[Part IX]{Givental3}.
It would be instructive to compare Proposition \ref{elliptic} with GW theory of elliptic curves, which is non-trivial and described in \cite{curve1,curve3}.
\end{rem}
 
\begin{rem}
The same proof holds for abelian varieties.
However, when the dimension of the abelian variety is greater than $2$ and the degree-class $\beta$ is non-zero, there is a trivial quotient of the obstruction theory, so that both GW and KGW theories are trivial.
However, for degree-zero invariants, GW theory is non-trivial, but KGW theory is.
\end{rem}

\begin{rem}
The main idea in the proof of Proposition \ref{elliptic} is to use congruence relations for infinitely many prime numbers.
Indeed, if we are able to find, for a smooth DM stack $X$, automorphisms of prime orders for infinitely many primes and to compute the localization formulae, then we would be able to know all KGW invariants of $X$.
Therefore, a necessary condition is that the automorphism group of $X$ must be infinite.
However, it is not sufficient. For instance, some K3 surfaces have infinitely many symmetries, but it was shown by \cite{Kondo} that the maximal order of a finite group acting faithfully on a K3 surface is $3840$.
\end{rem}

\begin{rem}
Here are a few remarks on finiteness of automorphism group.
For projective hypersurfaces (except quadrics, elliptic curves, and K3 surfaces), every automorphism is projective and the automorphism group is finite.
All Batyrev Calabi--Yau (CY) $3$-folds have finite automorphism groups, see \cite{Batyrev}.
Every projective variety of general type has finite automorphism group.
CY varieties with Picard numbers $1$ or $2$ have finite automorphism groups. It is expected that most CY varieties with Picard number more than $4$ have infinitely many automorphisms.
In particular, it would be interesting to know whether the Schoen CY $3$-fold has automorphisms of prime order for infinitely many primes and to study its KGW theory, see \cite{Machine}.
\end{rem}

\section{K-theoretic FJRW theory}\label{FJRWth}
Similarly to KGW theory, we aim in this section to compute the K-theoretic FJRW invariants of a Landau--Ginzburg (LG) orbifold modulo prime numbers.
For simplicity of the exposition, we focus in this paper on the quintic polynomial with minimal group of symmetries.
However, it is straightforward to apply the same ideas to an LG orbifold $(W,G)$, where $W$ is an invertible polynomial and $G$ is an admissible group, as long as we only insert $\Aut(W)$-invariant states in the correlator. We refer to \cite{Guere1} for details.

\subsection{Sketch of Polishchuk--Vaintrob construction}
Let $W(x_1,\dotsc,x_5)$ be a quintic polynomial in five variables and let $\mu_5$ act on $\CC^5$ by multiplication by a fifth-root of unity.
The moduli space used in FJRW theory of $(W,\mu_5)$ is the moduli space $\sS_{g,n}^{1/5}$, which parametrizes $(\cC,\sigma_1,\dotsc,\sigma_n,\cL,\phi)$.
Precisely, the curve $\cC$ is an orbifold genus-$g$ stable curve with isotropy group $\mu_5$ at the markings $\sigma_1,\dotsc,\sigma_n$ and at the nodes (and trivial everywhere else), $\cL$ is a line bundle on $\cC$, and $\phi \colon \cL \to \omega_{\log}:=\omega_\cC(\sigma_1+\dotsb+\sigma_n)$ is an isomorphism.

Let $\pi$ be the projection from the universal curve to $\sS_{g,n}^{1/5}$ and $\cL$ be the universal line bundle.
In \cite{Polish1}, Polishchuk and Vaintrob constructed resolutions $R\pi_*\(\cL^{\oplus 5}\)=[A \to B]$ by vector bundles over $\sS_{g,n}^{1/5}$ such that there exists some morphism
$$\alpha \colon \cSy^4 A \to B^\vee$$
corresponding to the differentiation of the polynomial $W$, see \cite{Guere1} for details.
Taking $p \colon X \to \sS_{g,n}^{1/5}$ to be the total space of the vector bundle $A$, then the morphism $\alpha$ is interpreted as a global section of $p^*B^\vee$ over $X$, and the map $\beta \colon A \to B$ coming from the resolution is interpreted as a global section of $p^*B$.
As a consequence, we obtain a Koszul matrix factorization
$$\PV := \left\lbrace \alpha,\beta \right\rbrace := \(\Lambda^\bullet B^\vee,\alpha \wedge \cdot + \iota_{\beta}\) \in D(X,\alpha(\beta))$$
of potential $\alpha(\beta)$ over the space $X$, and the support of this matrix factorization is exactly the moduli space $\sS_{g,n}^{1/5}$.

The moduli space $\sS_{g,n}^{1/5}$ has several components depending on the monodromies $\underline{\gamma}:=(\gamma_1,\dotsc,\gamma_n) \in \mu_5^n$ at the markings, we denote by $\sS_{g,n}^{1/5}(\underline{\gamma})$ the corresponding component.
Assume all monodromies are non-zero, this is known as the \textit{narrow condition}.
Then the pairing $\alpha(\beta)$ is the zero function over $X$, and the matrix factorization $\PV$ becomes a two-periodic complex, exact off the moduli space $\sS_{g,n}^{1/5}(\underline{\gamma})$.
Therefore, we can define the push-forward along the projection map $p$ in the category of matrix factorization, yielding
$$p_*(\PV) \in D(\sS_{g,n}^{1/5}(\underline{\gamma}),0) \simeq D^b(\sS_{g,n}^{1/5}(\underline{\gamma})),$$
where on the right we have the derived category of coherent sheaves.

\begin{rem}\label{broad}
If we allow trivial monodromies (i.e.~we consider broad insertions), then the pairing $\alpha(\beta)$ does not vanish and we rather end with a functor
\begin{equation*}
\begin{array}{lccccl}
\Phi \colon & D_\Gamma(\aA^{\overline{\gamma}},W_{\overline{\gamma}}) & \longrightarrow & D_{\overline{G}}(\sS^\mathrm{rig}_{g,n}(\overline{\gamma}),0) & \simeq D([\sS^\mathrm{rig}_{g,n}(\overline{\gamma})/{\overline{G}}]) \\
&                               U                            & \longmapsto     & p_*(\ev^*(U) \otimes \PV),  & \\
\end{array}
\end{equation*}
where we need to consider rigidified moduli spaces, see \cite{Polish1} for details.
\end{rem}
	
In general, to any triangulated category $\cC$, we associate a Grothendieck group $K_0(\cC)$ by taking the free abelian group generated by the objects of the category and then moding out the relation
$$[A] - [B] + [C] = 0$$
for every distinguished triangle $A \to B \to C$.
Furthermore, any functor $f \colon \cC_1 \to \cC_2$ of triangulated dg categories induces a morphism of groups
$$f_! \colon K_0(\cC_1) \to K_0(\cC_2).$$
Eventually, when the category is the derived category of coherent sheaves on a smooth DM stack, we recover the usual K-theory of the stack.

\begin{rem}
If we apply it to the functor of Remark \ref{broad}, we get a morphism of groups
$$\Phi_! := K_0(\Phi) \colon K_0(D_\Gamma(\aA^{\overline{\gamma}},W_{\overline{\gamma
}})) \to K^0([\sS^\mathrm{rig}_{g,n}(\overline{\gamma})/\overline{G}]).$$
\end{rem}

\begin{dfn}\label{K-class}
We define the K-theoretic class
$$\cO^\vir_{\sS_{g,n}^{1/5}(\underline{\gamma})} := p_*(\PV) \in K^0(\sS_{g,n}^{1/5}(\underline{\gamma}))$$
and we call it the virtual structure sheaf of the moduli space $\sS_{g,n}^{1/5}(\underline{\gamma})$.
\end{dfn}

\begin{dfn}
Let $d_1,\dotsc,d_n \in \ZZ$ and non-trivial monodromies $\gamma_1,\dotsc,\gamma_n \in \mu_5$.
The K-theoretic FJRW invariant of the LG orbifold $(W,\mu_5)$ is
$$\chi\(\Psi_1^{\otimes d_1} \otimes \dotsm \otimes \Psi_n^{\otimes d_n} \otimes \cO^\vir_{\sS_{g,n}^{1/5}(\underline{\gamma})} \) \in \ZZ,$$
where $\chi \colon K^0(\sS_{g,n}^{1/5}(\underline{\gamma})) \to K^0(\mathrm{pt}) = \ZZ$ is the Euler characteristic and the line bundle $\Psi_i$ is the relative cotangent line at the $i$-th marked point.
\end{dfn}

\begin{rem}
A special feature of the LG orbifold $(W,\mu_5)$, and more generally when the polynomial has degree $d$ and the group is $\mu_d$, is that FJRW invariants do not depend on the polynomial $W$, as long as it is non-degenerate with the same weights and degree.
The same result holds for the K-theoretic version, as it holds at the matrix factorization level.
Hence, we can consider several choices for our quintic polynomial.
\end{rem}

\subsection{Invertible polynomials}
In the context of mirror symmetry, a well-behaved class of polynomials has been introduced by Berglund--H\"ubsch \cite{Hubsch}.
We say that a polynomial is invertible when it is non-degenerate and with as many variables as monomials.
According to Kreuzer--Skarke \cite{KS}, every invertible polynomial is a (Thom--Sebastiani) sum of invertible polynomials, with disjoint sets of variables, of the following three types
\begin{equation}\label{ThomSebastiani}
\begin{array}{lll}
\textrm{Fermat:} & \qquad x^{a+1} & \\
\textrm{chain of length } c: & \qquad x_1^{a_1}x_2+\dotsb+x_{c-1}^{a_{c-1}} x_c+x_c^{a_c+1} & (c \geq 2), \\
\textrm{loop of length } l: & \qquad x_1^{a_1}x_2+\dotsb+x_{l-1}^{a_{l-1}} x_l+x_l^{a_l}x_1 & (l \geq 2). \\
\end{array}
\end{equation}
In the case of the quintic polynomial, we have for example the following choices:
\begin{eqnarray}\label{quintic pol}
\textrm{Fermat:} & x_1^5+x_2^5+x_3^5+x_4^5+x_5^5 & \(\mu_5\)^5, \nonumber \\
\textrm{loop:} & x_1^4x_2+x_2^4x_3+x_3^4x_4+x_4^4x_5+x_5^4x_1 & \mu_{1025}, \nonumber \\
\textrm{chain:} & x_1^4x_2+x_2^4x_3+x_3^4x_4+x_4^4x_5+x_5^5 & \mu_{1280}, \\
\textrm{$2$-loops:} & x_1^4x_2+x_2^4x_3+x_3^4x_1+x_4^4x_5+x_5^4x_4 & \mu_{15} \times \mu_{65}, \nonumber \\
\textrm{loop-Fermat:} & x_1^4x_2+x_2^4x_3+x_3^4x_4+x_4^4x_1+x_5^5 & \mu_{255} \times \mu_5, \nonumber
\end{eqnarray}
where on the right we write the group $\Aut(W)$ of diagonal matrices leaving the polynomial invariant.

Let $W$ be an invertible quintic polynomial and $\Aut(W)$ be its maximal group of diagonal symmetries.
Recall the space $X$ is the total space of the vector bundle $A$ over the moduli space $\sS_{g,n}^{1/5}(\underline{\gamma})$, and that we have
$$R\pi_*\(\cL^{\oplus 5}\) = [A \to B].$$
Therefore, the vector bundles $A$ and $B$ are direct sums of five copies, that we write
$$A = A_1 \oplus \dotsb \oplus A_5 \quad \textrm{and} \quad B = B_1 \oplus \dotsb \oplus B_5.$$
We then have a natural action of $\Aut(W)$ on the vector bundles $A$ and $B$ by rescaling the fibers.
Precisely, the actions on $X$ in the examples \eqref{quintic pol} are
\begin{eqnarray}\label{quintic act}
\textrm{Fermat:} & (\zeta_1 x_1, \zeta_2 x_2,\zeta_3 x_3,\zeta_4 x_4,\zeta_5 x_5) & \zeta_j = e^{\frac{2 \ci \pi}{5}}, \nonumber \\
\textrm{loop:} & (\zeta x_1, \zeta^{-4} x_2,\zeta^{16} x_3,\zeta^{-64} x_4,\zeta^{256} x_5) & \zeta = e^{\frac{2 \ci \pi}{1025}}, \nonumber \\
\textrm{chain:} & (\zeta x_1, \zeta^{-4} x_2,\zeta^{16} x_3,\zeta^{-64} x_4, \zeta^{256} x_5) & \zeta = e^{\frac{2 \ci \pi}{1280}}, \\
\textrm{$2$-loops:} & (\zeta_1 x_1, \zeta_1^{-4} x_2,\zeta_1^{16} x_3,\zeta_2 x_4, \zeta_2^{-4} x_5) & \zeta_1 = e^{\frac{2 \ci \pi}{65}} ~,~~ \zeta_2 = e^{\frac{2 \ci \pi}{15}}, \nonumber \\
\textrm{loop-Fermat:} & (\zeta_1 x_1, \zeta_1^{-4} x_2,\zeta_1^{16} x_3,\zeta_1^{-64} x_4, \zeta_2 x_5) & \zeta_1 = e^{\frac{2 \ci \pi}{255}} ~,~~ \zeta_2 = e^{\frac{2 \ci \pi}{5}}. \nonumber
\end{eqnarray}
By construction, since the polynomial $W$ is $\Aut(W)$-invariant, the matrix factorization $\PV$ is $\Aut(W)$-equivariant and so is the virtual structure sheaf.

However, we need to be careful when we compute the $\Aut(W)$-fixed locus.
Indeed, the automorphism group of a $(W,\mu_5)$-spin curve $(\cC,\sigma_1,\dotsc,\sigma_n,\cL)$ is $\mu_5 \times \(\mu_5\)^{\#\textrm{nodes}}$, where the first factor rescales the line bundle $\cL$ and the second factor acts only on the orbifold curve $\cC$, it is the so-called ghost automorphism.
As a consequence, we rather consider the action of the group
$$G := \Aut(W)/\mu_5$$
on the space $X$ over the moduli space $\sS_{g,n}^{1/5}(\underline{\gamma})$.

Unfortunately, we still have too many fixed points.
For instance, in the Fermat example, the point
$$(\cC,\sigma_1,\dotsc,\sigma_n,\cL;x_1,0,\dotsc,0) \in X$$
is fixed.
Another example is the chain polynomial, for which the point
$$(\cC,\sigma_1,\dotsc,\sigma_n,\cL;0,\dotsc,0,x_5) \in X$$
is fixed.
In both cases, the $G$-fixed locus is non-compact.
We check easily that, among all invertible quintic polynomials, the only cases where the $G$-fixed locus is compact are
\begin{itemize}
\item the loop polynomial with group $G = \Aut(W)/\mu_5 = \mu_{205}$,
\item the $2$-loops polynomial with group $G = \( \mu_{65} \times \mu_{15} \) / \mu_5$.
\end{itemize}
Moreover, the $G$-fixed locus in the space $X$ equals the base $\sS_{g,n}^{1/5}(\underline{\gamma})$.
We are therefore able to apply the (non-virtual) localization formula on the smooth space $X$ to get the following theorem.

\begin{rem}
It is more convenient to work with cyclic groups. Therefore, in the $2$-loops polynomial case, we prefer to use $G=\mu_{195}$, where the $G$-action on $X$ is
$$(\zeta^{15} x_1, \zeta^{-60} x_2,\zeta^{240} x_3,\zeta^{65} x_4, \zeta^{-260} x_5) ~,~~ \zeta = e^{\frac{2 \ci \pi}{195}}.$$
\end{rem}

\begin{dfn}
Let $l \in \ZZ$.
Adams operation $\Psi^l$ in K-theory is defined on a line bundle $L$ over a space $S$ as
$$\Psi^l (L) := L^{\otimes l}$$
and then extended as a ring homomorphism
$$\Psi^l \colon K^0(S) \to K^0(S).$$
\end{dfn}

\begin{thm}\label{equiv FJRW}
Consider the two following situations:
\begin{itemize}
\item $W$ be the loop polynomial, $G:=\mu_{205}$, $\zeta$ be a primitive $205$-th root of unity, and $(a_1,\dotsc,a_5) = (1,-4,16,-64,256)$,
\item $W$ be the $2$-loops polynomial, $G:=\mu_{195}$, $\zeta$ be a primitive $195$-th root of unity, and $(a_1,\dotsc,a_5) = (15,-60,240,65,-260)$.
\end{itemize}
Let $g$ and $n$ be non-negative integers in the stable range $2g-2+n>0$, and $\underline{\gamma} \in \mu_5^n$ be non-trivial monodromies.
Then the $G$-equivariant virtual structure sheaf equals
\begin{equation}\label{equiv formul}
\cO^{\vir,G}_{\sS_{g,n}^{1/5}(\underline{\gamma})} =  \exp \( \sum_{l \leq -1} \sum_{j=1}^5 \cfrac{\zeta^{a_j \cdot l}}{l} \Psi^l (-R\pi_*\cL) \) \in K^0(\sS_{g,n}^{1/5}(\underline{\gamma})) \otimes \CC.
\end{equation}
\end{thm}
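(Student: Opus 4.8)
The plan is to run the ordinary (non-virtual) equivariant localization formula \eqref{loca form} on the smooth total space $X = \mathrm{Tot}(A)$. In both cases of the statement the $G$-fixed locus of $X$ is the zero section, canonically identified with the base $S := \sS_{g,n}^{1/5}(\underline{\gamma})$ through $p$; write $\iota \colon S \hookrightarrow X$ for this inclusion, whose normal bundle is the fiberwise $G$-representation $A = A_1 \oplus \dotsb \oplus A_5$, the factor $A_j$ carrying the $G$-character determined by $a_j$. Since the matrix factorization $\PV$ is supported on the compact zero section, its class $[\PV]$ is governed by localization and $p_!(\PV)$ is defined at the level of matrix factorizations. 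Pushing forward along $p$ and using $p \circ \iota = \mathrm{id}_S$ to collapse $p_!\iota_!$ to the identity yields
$$\cO^{\vir,G}_{\sS_{g,n}^{1/5}(\underline{\gamma})} = p_*(\PV) = \cfrac{\iota^*[\PV]}{\lambda_{-1}^G(A^\vee)} \in K^0(G,S)_\loc.$$

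Next I would compute the restriction $\iota^*[\PV]$. The differential of $\PV$ is $\alpha \wedge \cdot + \iota_\beta$, where $\alpha$ and $\beta$ are the sections of $p^*B^\vee$ and $p^*B$ induced from $\cSy^4 A \to B^\vee$ and $A \to B$ by the tautological section of $p^*A$. On the zero section that tautological section vanishes, hence $\iota^*\alpha = 0$ and $\iota^*\beta = 0$, so the restricted differential is zero and $\iota^*\PV = (\Lambda^\bullet B^\vee, 0)$. Its $G$-equivariant class is $\sum_k (-1)^k \Lambda^k B^\vee = \lambda_{-1}^G(B^\vee)$. Combining this with the previous display and multiplicativity of $\lambda_{-1}$ gives, using $R\pi_*\cL^{\oplus 5} = [A \to B] = A - B$ in $K$-theory,
$$\cO^{\vir,G}_{\sS_{g,n}^{1/5}(\underline{\gamma})} = \cfrac{\lambda_{-1}^G(B^\vee)}{\lambda_{-1}^G(A^\vee)} = \lambda_{-1}^G\(B^\vee - A^\vee\) = \lambda_{-1}^G\((-R\pi_*\cL^{\oplus 5})^\vee\).$$

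The last step is bookkeeping. The five summands of $R\pi_*\cL^{\oplus 5}$ are copies of $R\pi_*\cL$ bearing the distinct $G$-characters recorded by $a_1,\dotsc,a_5$, so multiplicativity factors the class as $\prod_{j=1}^5 \lambda_{-1}^G$ of $(-R\pi_*\cL)^\vee$ twisted by the $j$-th character. I would then expand each factor with the standard identity $\lambda_{-1}(E) = \exp\(-\sum_{m \geq 1} \Psi^m(E)/m\)$, specialize the character ring $R(G)_\CC \to \CC$ sending the generator to $\zeta$ so that the $j$-th twist contributes the scalar $\zeta^{a_j m}$, and finally reindex by $l = -m$ together with the duality relation $\Psi^{-l}(E^\vee) = \Psi^l(E)$. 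This lands precisely on the exponential in \eqref{equiv formul}.

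The main obstacle is the content of the first two paragraphs: legitimizing the use of \eqref{loca form} for the K-class of a matrix factorization on the non-proper space $X$ — which is justified here because $\PV$ is supported on the compact zero section, so $p_*\PV$ already exists — and checking that its restriction to the zero section is exactly $\lambda_{-1}^G(B^\vee)$, i.e.~that the Koszul differential degenerates there. Once these geometric inputs are secured, the passage to Adams operations and the identification with \eqref{equiv formul} is a purely formal manipulation.
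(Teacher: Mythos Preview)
Your proposal is correct and follows essentially the same route as the paper's proof: apply the non-virtual localization formula on the smooth total space $X$, identify the resulting class as $\lambda^G_{-1}(B^\vee - A^\vee)$, and then expand via the Adams-operation identity. The paper is slightly more direct at one point --- it writes $[\PV] = \lambda^G_{-1}(p^*B^\vee)$ immediately on all of $X$ (the K-class of a two-periodic complex depends only on the underlying $\ZZ/2$-graded bundle, not the differential), which bypasses your restriction-and-degeneration argument for $\iota^*[\PV]$, but the substance is the same.
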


\begin{proof}
In the $G$-equivariant K-theory of the space $X$, the matrix factorization equals
$$\PV = \lambda^G_{-1}p^*B^\vee \in K^0(G,X)$$
and by the localization formula we get
$$\PV = \iota_! \( \cfrac{\lambda^G_{-1}B^\vee}{\lambda^G_{-1} A^\vee} \) = \iota_! \( \lambda^G_{-1}\(B^\vee - A^\vee \)\) \in K^0(G,X)_\loc$$
in the localized ring, where $\iota \colon \sS_{g,n}^{1/5}(\underline{\gamma}) \hookrightarrow X$ is the zero section.
Taking the push-forward along the projection map $p$, we obtain the $G$-equivariant virtual structure sheaf
$$\cO^{\vir,G}_{\sS_{g,n}^{1/5}(\underline{\gamma})} = \lambda^G_{-1}\(B^\vee - A^\vee \) \in K^0(G,\sS_{g,n}^{1/5}(\underline{\gamma}))_\loc \simeq K^0(\sS_{g,n}^{1/5}(\underline{\gamma})) \otimes \CC.$$
If $V$ is a vector bundle, we can express the $\lambda$-structure in terms of Adams operators via the formula
\begin{equation*}
\lambda_{-p}(V^\vee) = \exp \left( \sum_{l \leq -1} \cfrac{p^{-l}}{l} ~~ \Psi^l(V) \right),
\end{equation*}
Moreover, if the action of a group $G$ on the vector bundle $V$ is by rescaling fibers with $\zeta \in G$, then
\begin{equation*}
\lambda^G_{-1}(V^\vee) = \lambda_{-\zeta^{-1}}(V^\vee) = \exp \left( \sum_{l \leq -1} \cfrac{\zeta^l}{l} ~~ \Psi^l(V) \right),
\end{equation*}
In our situation, we find formula \eqref{equiv formul}.
\end{proof}

\begin{rem}
In \cite{Guere1}, we define the characteristic class $\fc_t \colon K^0(S) \to H^*(S)[\![t]\!]$ by
$$\fc_t(B-A) = \Ch(\lambda_{-t}(B^\vee - A^\vee)) \Td(B-A)$$
and we then obtain the formula
$$\lim_{t \to 1} \prod_{i=1}^5 \fc_{t_j}(-R\pi_*\cL) = \cvir \in H^*(\sS_{0,n}^{1/5}(\underline{\gamma}))$$
for the FJRW virtual cycle of $(W,\mu_5)$, where $t_j := t^{a_j}$.
This formula is only valid in genus $0$ and we do not expect the left-hand side to converge in positive genus when $t \to 1$.
However, by Theorem \ref{equiv FJRW}, we see that the formula converges for every genus when $t \to \zeta$.
\end{rem}

In order to get congruences for the non-equivariant limit, we need to consider a subgroup of $\Aut(W)$ with prime order and whose fixed locus in $X$ is compact.
The only invertible polynomial for which it is possible is the loop polynomial, together with the subgroup $\mu_{41}$ acting on $X$ as
$$(\zeta_{41} x_1,\zeta_{41}^{37} x_2,\zeta_{41}^{16} x_3,\zeta_{41}^{18} x_4,\zeta_{41}^6 x_5) ~,~~ \zeta_{41} :=e^{\frac{2 \ci \pi}{41}}.$$

\begin{rem}
The prime decomposition of $205$ is $5 \cdot 41$, so that we could also hope for a congruence modulo $5$. However, the subgroup $\mu_5$ acts trivially on $X$. Indeed, it acts as
$$(\zeta_5 x_1,\zeta_5 x_2,\zeta_5 x_3,\zeta_5 x_4,\zeta_5 x_5) ~,~~ \zeta_5 :=e^{\frac{2 \ci \pi}{5}},$$
which is rescaled by the automorphism group of the $(W,\mu_5)$-spin curve, so that the fixed locus is $X$.
\end{rem}

\begin{cor}\label{nonequiv formul}
Let $W$ be the loop polynomial and $(a_1,\dotsc,a_5) = (1,-4,16,-64,256)$.
For any non-negative integers $g$ and $n$ in the stable range $2g-2+n>0$, non-trivial monodromies $\underline{\gamma} \in \mu_5^n$, and integers $d_1,\dotsc,d_n \in \ZZ$, the K-theoretic FJRW invariant of $(W,\mu_5)$
$$\chi\(\Psi_1^{\otimes d_1} \otimes \dotsm \otimes \Psi_n^{\otimes d_n} \otimes \cO^\vir_{\sS_{g,n}^{1/5}(\underline{\gamma})} \) \in \ZZ$$
equals
\begin{equation*}
-\sum_{k=1}^{40}
\chi\(\Psi_1^{\otimes d_1} \otimes \dotsm \otimes \Psi_n^{\otimes d_n} \otimes \exp \( \sum_{l \leq -1} \sum_{j=1}^5 \cfrac{e^{\frac{2 \ci \pi k l \cdot a_j}{41}}}{l} \Psi^l (-R\pi_*\cL) \)\) \in \ZZ/41\ZZ.
\end{equation*}
\end{cor}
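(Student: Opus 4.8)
The plan is to transport the congruence argument of Corollary \ref{non equiv limit} from KGW theory to the present FJRW setting, with Theorem \ref{equiv FJRW} playing the role of the localization computation. The decisive point is that the subgroup $H := \mu_{41} \subset \mu_{205}$ has prime order and, in contrast to $\mu_5$---whose action on $X$ is absorbed by the automorphisms of the $(W,\mu_5)$-spin curve and therefore fixes all of $X$---has fixed locus in the total space $X$ compact and equal to the base $\sS_{g,n}^{1/5}(\underline{\gamma})$. Consequently the localization computation in the proof of Theorem \ref{equiv FJRW} carries over verbatim with $G$ replaced by $H$: the generator of $\mu_{41}$, realized inside $\mu_{205}$, rescales $A_j$ and $B_j$ by $\zeta_{41}^{a_j}$ with $(a_1,\dots,a_5)=(1,-4,16,-64,256)$, so for each primitive root $\zeta_{41}^k$ the image of the $H$-equivariant virtual structure sheaf in $K^0(H,\sS_{g,n}^{1/5}(\underline{\gamma}))_\loc \simeq K^0(\sS_{g,n}^{1/5}(\underline{\gamma})) \otimes \CC$ is
$$\exp\left(\sum_{l \leq -1}\sum_{j=1}^5 \frac{e^{\frac{2\ci\pi k l \cdot a_j}{41}}}{l}\,\Psi^l(-R\pi_*\cL)\right).$$

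Next I would invoke integrality. Because the K-theoretic FJRW invariants are Euler characteristics of genuine K-theory classes coming from the Polishchuk--Vaintrob matrix factorization, the $H$-equivariant Euler characteristic
$$\chi^H\left(\Psi_1^{\otimes d_1}\otimes\dotsm\otimes\Psi_n^{\otimes d_n}\otimes\cO^{\vir,H}_{\sS_{g,n}^{1/5}(\underline{\gamma})}\right)$$
lies in the representation ring $R(H)\simeq\ZZ[X]/(1-X^{41})$, so it may be written as $\sum_{m=0}^{40}c_m X^m$ with $c_m\in\ZZ$. The non-equivariant limit $X\mapsto 1$ returns precisely the integer invariant $\sum_{m=0}^{40}c_m$ that the corollary computes.

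Finally I would specialize and sum. Under the localization map $R(H)_\CC\to R(H)_{\CC,\loc}\simeq\CC$ sending $X\mapsto\zeta_{41}^k$, the class above goes to $\sum_{m=0}^{40}c_m\zeta_{41}^{km}$, which by the displayed formula is exactly the complex number $B_k$ inside the sum in the statement. Adding over all forty primitive roots and applying the orthogonality relation $\sum_{k=1}^{40}\zeta_{41}^{km}=41\,\delta_{m,0}-1$ yields
$$\sum_{k=1}^{40}B_k=41\,c_0-\sum_{m=0}^{40}c_m,$$
so that $\sum_{m=0}^{40}c_m\equiv-\sum_{k=1}^{40}B_k\pmod{41}$, which is the asserted congruence.

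The one step that is more than bookkeeping is the transfer of Theorem \ref{equiv FJRW} to the prime-order subgroup $\mu_{41}$: everything rests on the verification that the $\mu_{41}$-fixed locus in the smooth space $X$ is compact and reduces to the base, so that the non-virtual localization formula again delivers a closed expression in $K^0(\sS_{g,n}^{1/5}(\underline{\gamma}))\otimes\CC$. Once this is in hand the remaining arithmetic---specialization of representation-ring classes and summation over primitive roots---is identical to the proof of Corollary \ref{non equiv limit}.
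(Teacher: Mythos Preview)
Your proposal is correct and follows exactly the approach implicit in the paper: the discussion immediately preceding the corollary isolates the subgroup $\mu_{41}\subset\mu_{205}$ as the unique prime-order subgroup whose fixed locus in $X$ is the base, so that the localization computation of Theorem \ref{equiv FJRW} applies verbatim with $\mu_{41}$ in place of $\mu_{205}$, and then the summation over primitive roots is the same arithmetic as in Corollary \ref{non equiv limit}. The paper does not spell out a separate proof, and your write-up fills in precisely the steps the reader is meant to supply.
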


\begin{rem}
More generally, the K-class
\begin{equation*}
B_{41} := -\sum_{k=1}^{40} \exp \( \sum_{l \leq -1} \sum_{j=1}^5 \cfrac{e^{\frac{2 \ci \pi k l \cdot a_j}{41}}}{l} \Psi^l (-R\pi_*\cL) \) \in K^0(G,\sS_{g,n}^{1/5}(\underline{\gamma}))
\end{equation*}
lies in the K-theoretic ring with $\ZZ$-coefficients and we know there exists another K-class $R$ in $K^0(G,\sS_{g,n}^{1/5}(\underline{\gamma}))$ with $\ZZ$-coefficients such that
$$\cO^{\vir}_{\sS_{g,n}^{1/5}(\underline{\gamma})} = B_{41}+41 \cdot R \in K^0(G,\sS_{g,n}^{1/5}(\underline{\gamma})).$$
This yields the following formula for the FJRW virtual cycle of $(W,\mu_5)$
\begin{equation*}
\cvir = \sum_{k=1}^{40} \lim_{t \to \zeta_{41}^k} \prod_{i=1}^5 \fc_{t_j}(-R\pi_*\cL) + 41 \cdot \Ch\(R\) \cdot \Td(-R\pi_*\cL)^5 \in H^*(\sS_{g,n}^{1/5}(\underline{\gamma}))_\QQ.
\end{equation*}
However, since the cohomology is taken with $\QQ$-coefficients, we do not see clear effects of the congruences for the virtual structure sheaf.
An idea would be to replace $R$ with an integral linear combination of (natural) vector bundles over $\sS_{g,n}^{1/5}(\underline{\gamma})$ and see if it is possible to tune coefficients so that the virtual cycle is pure-dimensional.
\end{rem}

\begin{rem}
We observe, from its definition using the quintic Fermat polynomial, that the virtual structure sheaf decomposes into five identical summands, each one corresponding to the so-called $5$-spin theory.
It is then divisible by five in the K-theoretic ring with $\ZZ$ coefficients. However, the K-class $R$ is not divisible by five, as it corresponds to the sub two-periodic complex of $p_*(\PV)$ generated by $\cSy^{41 \cdot l} A^\vee_1$, which is not stable under the map $\cO \to \cSy^4 A_1^\vee \otimes B_1^\vee$.
\end{rem}



\begin{bibdiv}
\begin{biblist}


\bib{BF}{article}{
	author={Behrend, Kai},
	author={Fantechi, Barbara},
	title={The intrinsic normal cone},
	journal={Inv. Math},
	volume={128},
	date={1997},
	number={},
	pages={45-88},
} 

\bib{Hubsch}{article}{
	author={Berglund, Per},
	author={H\"ubsch, Tristan},
	title={A generalized construction of mirror manifolds},
	journal={Nuclear Physics B},
	volume={393},
	date={1993},
	number={},
	pages={391-397},
}

\bib{Candelas}{article}{
	author={Candelas, Philip},
	author={de~la Ossa, Xenia~C.},
	author={Green, Paul~S.},
	author={Parkes, Linda},
	title={A pair of {C}alabi--{Y}au manifolds as an exact soluble superconformal theory},
	date={1991},
	journal={Nucl. Phys.},
	volume={B 359},
	pages={21-74},
}

\bib{Li}{article}{
	author={Chang, Huai-Liang},
	author={Li, Jun},
	title={Gromov--Witten invariants of stable maps with fields},
	journal={IMRN},
	volume={18},
	date={2012},
	number={},
	pages={4163-4217},
}

\bib{Li5}{article}{
	author={Chang, Huai-Liang},
	author={Guo, Shuai},
	author={Li, Jun},
	title={BCOV's Feynman rule of quintic 3-folds},
	journal={available at arXiv:1810.00394v2},
	volume={},
	date={},
	number={},
	pages={},
}

\bib{LGCY}{article}{
	author={Chiodo, Alessandro},
	author={Iritani, Hiroshi},
	author={Ruan, Yongbin},
	title={Landau--Ginzburg/Calabi--Yau correspondence, global mirror symmetry and Orlov equivalence},
	journal={Publications math\'ematiques de l'IH\'ES},
	volume={119},
	date={2014},
	number={1},
	pages={127-216},
}

\bib{LG/CYquintique}{article}{
	author={Chiodo, Alessandro},
	author={Ruan, Yongbin},
	title={Landau--Ginzburg/Calabi--Yau correspondence for quintic threefolds via symplectic transformations},
	journal={Invent. Math.},
	volume={182},
	date={2010},
	number={1},
	pages={117-165},
}

\bib{Chiodo2}{article}{
	author={Chiodo, Alessandro},
	author={Ruan, Yongbin},
	title={A global mirror symmetry framework for the Landau--Ginzburg/Calabi--Yau correspondence},
	journal={to appear in the Special Volume of the Ann. Inst. Fourier on the Workshop on Geometry and Physics of the Landau--Ginzburg model, available at arXiv:1307.0939},
	volume={},
	date={},
	number={},
	pages={},
}



%

\bib{Honglu5}{article}{
	author={Fan, Honglu},
	author={Lee, YP},
	title={Towards a quantum Lefschetz hyperplane theorem in all genera},
	journal={Geometry and Topology},
	volume={23},
	date={2019},
	number={1},
	pages={493-512},
}

\bib{FJRW1}{article}{
	author={Fan, Huijun},
	author={Jarvis, Tyler},
	author={Ruan, Yongbin},
	title={The Witten equation, mirror symmetry and quantum singularity theory},
	journal={Ann. of Math.},
	volume={178},
	date={2013},
	number={1},
	pages={1-106},
}   

\bib{FJRW2}{article}{
	author={Fan, Huijun},
	author={Jarvis, Tyler},
	author={Ruan, Yongbin},
	title={The Witten equation and its virtual fundamental cycle},
	journal={available at arXiv:0712.4025},
	volume={},
	date={},
	number={},
	pages={},
} 


\bib{Gi}{article}{
	author={Givental, Alexander B.},
	title={A mirror theorem for toric complete intersections},
	journal={Topological field theory, primitive forms and related topics, Progr. Math.},
	volume={160},
	date={Kyoto, 1996},
	number={},
	pages={141-175},
}


\bib{Givental3}{article}{
	author={Givental, Alexander B.},
	title={Permutation-equivariant quantum K-theory},
	journal={Series of papers available on arXiv},
	volume={},
	date={},
	number={},
	pages={},
}

\bib{GraberPandha}{article}{
	author={Graber, Tom},
	author={Pandaripande, Rahul},
	title={Localization of virtual classes},
	journal={Inventiones Mathematicae},
	volume={135},
	date={1999},
	number={2},
	pages={487-518},
}

\bib{Guere1}{article}{
	author={Gu\'er\'e, J\'er\'emy},
	title={A Landau--Ginzburg Mirror Theorem without Concavity},
	journal={Duke Mathematical Journal},
	volume={165},
	date={2016},
	number={13},
	pages={2461-2527},
}

%

\bib{Guere10}{article}{
	author={Gu\'er\'e, J\'er\'emy},
	title={Hodge--Gromov--Witten theory},
	journal={available at arXiv:1908.11409},
	volume={},
	date={},
	number={},
	pages={},
}

\bib{Ruan5}{article}{
	author={Guo, Shuai},
	author={Janda, Felix},
	author={Ruan, Yongbin},
	title={Structure of Higher Genus Gromov--Witten Invariants of Quintic 3-folds},
	journal={available at arXiv:1812.11908},
	volume={},
	date={},
	number={},
	pages={},
}

\bib{Machine}{article}{
	author={He, Yang-Hui},
	title={The Calabi--Yau landscape: from geometry, to physics, to machine-learning},
	journal={available at arXiv:1812.02893},
	volume={},
	date={},
	number={},
	pages={},
}


\bib{phys}{article}{
      author={Jockers, H.},
      author={Mayr, P.},
      title={Quantum K-theory of Calabi--Yau manifolds},
      journal={J. High Energ. Phys.},
      volume={11},
      date={2019},
      number={},
      pages={},
    }


\bib{Kondo}{article}{
	author={Kondo, Shigeyuki},
	title={The maximum order of finite groups of automorphisms of K3 surfaces},
	journal={Amer. J. Math.},
	volume={121},
	date={1999},
	number={6},
	pages={1245-1252},
}

\bib{KS}{article}{
	author={Kreuzer, Maximilian},
	author={Skarke, Harald},
	title={On the classification of quasihomogeneous functions},
	journal={Comm. Math. Phys.},
	volume={150},
	date={1992},
	number={1},
	pages={137-147},
}


\bib{YP}{article}{
	author={Lee, YP},
	title={Quantum K-Theory I: Foundations},
	journal={Duke Math. J.},
	volume={121},
	date={2004},
	number={3},
	pages={389-424},
}

\bib{LeeQu}{article}{
	author={Lee, YP},
	author={Qu, F},
	title={Euler characteristics of universal cotangent line bundles on $\overline{\cM}_{1,n}$},
	journal={Proc. Amer. Math. Soc.},
	volume={142},
	date={2014},
	number={2},
	pages={429-440},
}

\bib{Lho}{article}{
	author={Lho, Hyenho},
	author={Pandaripande, Rahul},
	title={Holomorphic anomaly equations for the formal quintic},
	journal={to appear in Peking Mathematical Journal},
	volume={},
	date={2019},
	number={},
	pages={},
}

\bib{LLY}{article}{
	author={Lian, Bong H.},
	author={Liu, Kefeng},
	author={Yau, Shing-Tung},
	title={Mirror principle. I},
	journal={Asian J. Math.},
	volume={1},
	date={1997},
	number={4},
	pages={729-763},
}

\bib{LiuXu}{article}{
	author={Liu, Kefeng},
	author={Xu, Hao},
	title={Intersection Numbers and Automorphisms of Stable Curves},
	journal={The Michigan Mathematical Journal},
	volume={58},
	date={2009},
	number={},
	pages={},
}

  
\bib{curve1}{article}{
	author={Okounkov, Andrei},
	author={Pandaripande, Rahul},
	title={Gromov--Witten theory, Hurwitz numbers, and completed cycles},
	journal={Ann. of Math.},
	volume={163},
	date={2006},
	number={},
	pages={517-560},
}		
		
	
\bib{curve3}{article}{
	author={Okounkov, Andrei},
	author={Pandaripande, Rahul},
	title={Virasoro constraints for target curves},
	journal={Invent. Math.},
	volume={163},
	date={2006},
	number={},
	pages={47-108},
}

\bib{Oguiso}{article}{
	author={Oguiso, Keiji},
	author={Yu, Xun},
	title={Automorphism groups of smooth quintic threefolds},
	journal={Asian Journal of Math.},
	volume={23},
	date={2019},
	number={2},
	pages={201-256},
}
   
\bib{Polish1}{article}{
	author={Polishchuk, Alexander},
	author={Vaintrob, Arkady},
	title={Matrix factorizations and cohomological field theories},
	journal={J. Reine Angew. Math.},
	volume={714},
	date={2016},
	number={},
	pages={1-122},
} 

\bib{Batyrev}{article}{
	author={Tehrani, Mahammad Farajzadeh},
	title={Automorphism group of Batyrev Calabi--Yau threefolds},
	journal={Manuscripta Mathematica},
	volume={146},
	date={2015},
	number={},
	pages={299-306},
}

\bib{Tho1}{article}{
	author={Thomason, R},
	title={Equivariant resolution, linearization, and Hilbert's fourteenth problem over arbitrary base schemes},
	journal={Adv. in Math.},
	volume={65},
	date={1987},
	number={},
	pages={16-34},
} 

\bib{Tho2}{article}{
	author={Thomason, R},
	title={Une formule de Lefschetz en K-th\'eorie \'equivariante alg\'ebrique},
	journal={Duke Math. J.},
	volume={68},
	date={1992},
	number={},
	pages={447-462},
}

\bib{Tonita}{article}{
	author={Tonita, Valentin},
	title={A virtual Kawasaki--Riemann--Roch formula},
	journal={Pacic J. Math.},
	volume={268},
	date={2014},
	number={1},
	pages={249-255},
}

%

%
%
%
%
%
%
%
%
%
%
%
%
%

\end{biblist}
\end{bibdiv}

\end{document}